\documentclass[a4paper]{amsart}

\usepackage{amsmath, amsthm, amssymb}
\usepackage{amssymb}
\usepackage{graphicx}
\usepackage{subfigure}
\usepackage[export]{adjustbox}

\usepackage{mathrsfs}     

\topmargin-.25in
\oddsidemargin.05in
\evensidemargin.05in
\textheight8.55in
\textwidth6in

\newcommand\bi{\begin{itemize}}
\newcommand\ei{\end{itemize}}

\def\input{symbols} \clearpage{\input{symbols} \clearpage}
\def\addsymbol #1: #2#3{$#1$ \> \parbox{5in}{#2 \dotfill \pageref{#3}}\\}

\newtheorem{fact}{Fact}

\newtheorem{defin}{Definition}[section]
          
          \newtheorem{teo}{Theorem}[section]
          
          \newtheorem{con}{Conjecture}
          \newtheorem{cond}{Condition}
          \newtheorem{prop}[teo]{Proposition}
          \newtheorem{lem}{Lemma}[section]
          \newtheorem{rmk}{Remark}
          \newtheorem{cor}{Corollary}[section]
          
          \newcommand{\bfact}{\begin{fact}}
          \newcommand{\efact}{\end{fact}}

          \newcommand{\cA}{A^N}

          \newcommand{\beq}{\begin{equation}}
          \newcommand{\eeq}{\end{equation}}
          \newcommand{\beqn}{\begin{eqnarray}}
          \newcommand{\beqnn}{\begin{eqnarray*}}
          \newcommand{\eeqn}{\end{eqnarray}}
          \newcommand{\eeqnn}{\end{eqnarray*}}
          \newcommand{\bprop}{\begin{prop}}
          \newcommand{\eprop}{\end{prop}}
          
          \newcommand{\bc}{\be\begin{array}{r@{\,}c@{\,}l}}
\newcommand{\ec}{\end{array}\ee}

          \newcommand{\bcor}{\begin{cor}}
          
          \newcommand{\ecor}{\end{cor}}
          \newcommand{\bcon}{\begin{con}}
          \newcommand{\econ}{\end{con}}
          \newcommand{\bcond}{\begin{cond}}
          
          \newcommand{\econd}{\end{cond}}
          \newcommand{\bteo}{\begin{teo}}
          \newcommand{\eteo}{\end{teo}}
          \newcommand{\brm}{\begin{rmk}}
             
          \newcommand{\erm}{\end{rmk}}
          \newcommand{\blem}{\begin{lem}}
          \newcommand{\elem}{\end{lem}}

          \newcommand{\ben}{\begin{enumerate}}
          \newcommand{\een}{\end{enumerate}}
          \newcommand{\bei}{\begin{itemize}}
          \newcommand{\eei}{\end{itemize}}
	 
          \newcommand{\bdf}{\begin{defin}}
          \newcommand{\edf}{\end{defin}}

          \renewcommand{\>}{&>&}
          \renewcommand{\le}{\leq}

          \newcommand{\Z}{{\mathbb Z}}

          \newcommand{\R}{{\mathbb R}}
          
          \newcommand{\E}{{\mathbb E}}
          
          \renewcommand{\P}{{\mathbb P}}

          \newcommand{\N}{{\mathbb N}}


          \newcommand{\chA}{\hat A^N}

          \newcommand{\cV}{{\mathcal V}}

          \newcommand{\D}{{\mathcal D}}

          \newcommand{\hPi}{{\hat \pi}}

\newcommand\bigCI{\mathop{\underline{\raisebox{0pt}[0pt][1pt]{$\;||\;$}}}}

\newcommand{\btt}{\begin{theorem}}
\newcommand{\ett}{\end{theorem}}
\newcommand{\Net}{\mathcal{N}}

\newcommand{\be}{\begin{equation}}
\newcommand{\ee}{\end{equation}}

          \newcommand{\cL}{\lambda}

          \newcommand\sqr{\vcenter{
          \hrule height.1mm
          \hbox{\vrule width.1mm height2.2mm\kern2.18mm\vrule width.1mm}
          \hrule height.1mm}}        


\begin{document}


\begin{center}

\noindent{\large \textbf{The sequential loss of allelic diversity}} 

\noindent{Submitted to appear in the Festschrift in honor of Peter Jagers}

\bigskip

\noindent {\normalsize Guillaume Achaz$^{1, 3}$, Amaury Lambert$^{1,2}$, Emmanuel Schertzer$^{1,2}$}\\
\noindent {\small \it
$^{1}$ Center for Interdisciplinary Research in Biology (CIRB), Coll\`ege de France, CNRS, INSERM, PSL Research University, Paris, France; \\
$^{2}$ Laboratoire de Probabilit\'es, Statistique et Mod\'elisation (LPSM), Sorbonne Universit\'e, CNRS, Paris, France; \\
$^{3}$ Institut de Syst\'ematique, \'Evolution, Biodiversit\'e (ISYEB), MNHN, CNRS, Sorbonne Universit\'e, Paris, France. 
\medskip
}
\end{center}

\paragraph{\bf Abstract.}
This paper gives a new flavor of what Peter Jagers and his co-authors call `the path to extinction'.
In a neutral population with constant size $N$, we assume that each individual at time $0$ carries a distinct type, or allele. We consider the joint dynamics of these $N$ alleles, for example the dynamics of their respective frequencies and more plainly the nonincreasing process counting the number of alleles remaining by time $t$.  We call this process the extinction process. 
We show that in the Moran model, the extinction process is distributed as the process counting (in backward time) the number of common ancestors to the whole population, also known as the block counting process of the $N$-Kingman coalescent. 
Stimulated by this result, we investigate: (1) whether it extends to an identity between the frequencies of blocks in the Kingman coalescent and the frequencies of alleles in the extinction process, both evaluated at jump times; (2) whether it extends to the general case of $\Lambda$-Fleming-Viot processes.


\paragraph{\bf Keywords.} Coalescent, $\Lambda$-Fleming-Viot, look-down, extinction, population genetics, urn model, intertwining.

\paragraph{\bf MSC 2000 Classification.} 60-06, 60G09, 60J20, 60J70, 92D10.

\section{Introduction}

In this note, we are interested in so-called neutral models of population genetics, that is, where all individuals are exchangeable in the face of  death and reproduction \cite{Eth, Ewe}. In such a population, \emph{genetic drift} refers to the randomness of births and deaths and to its effect on the composition of the population. If individuals are given a type, also called \emph{allele}, which is transmitted faithfully to their offspring, one can follow the fluctuations of the numbers of carriers of each allele through time under the sole action of genetic drift \cite{Eth, Ewe}. In the absence of mutation, the number of alleles present in the population can only decrease, and does so exactly at times when the last carrier of a given allele dies. In a population where births and deaths compensate so as to keep its total size constant equal to $N$, the number of alleles decreases sequentially until one single allele remains present, an event called \emph{fixation}. By exchangeability, each allele has the same probability $1/N$ to be the one that fixes.   

Here, we study this process of sequential loss of allelic diversity under the assumption that all individuals initially carry a distinct allele, and its limit as $N\to\infty$. The genealogical model we consider is a classical model of population genetics called the $\Lambda$-Fleming-Viot process \cite{BLG, DK}, whose law is characterized by a finite measure $\Lambda$ on $[0,1]$. When $\Lambda= \Lambda(\{0\})\,\delta_0$, this process is also known as the Moran process \cite{Eth, Ewe}. In this case, all offspring sizes at birth times are a.s. equal to 2. In all other cases, roughly speaking, each offspring size $\xi$ is binomially distributed with parameters $N$ and $p$, where $p$ is drawn in the measure $x^{-2}\Lambda(dx)$ conditional on $\xi \geq 2$.\\
\\
Let us introduce the coalescent (i.e., backwards in time) view of the $\Lambda$-Fleming-Viot process.
The $\Lambda$-Fleming-Viot process is stationary, so one can fix an arbitrary time called present time, and count the number of \emph{ancestors} $\cA_t$ common to the whole population alive at present time, $t$ units of time before the present. We can define more generally the vector $\rho^N(t)$ of frequencies of the descendances at present time of these $\cA_t$ ancestors. The counter $\cA_t$ decreases as $t$ increases and we can study the sequence of its jump times $\left(T^{N,k}\right)_{k=2}^N$, where $T^{N,k}$ is the time at which $\cA$ decreases by crossing or leaving $k$, as time runs backwards (i.e., as $t$ increases). 
 We are also interested in the sequence $\left(\pi_k^N \right)_{k=2}^N$, called the \emph{ancestral block process}, where $\pi_k^N=\rho^N\left(T^{N,k}\right)$ the embedded chain of $\rho^N$ (with possible repeats).
 
 As $N\to\infty$, the processes $(\cA_t;t\geq 0)$ converge in the sense of f.d.d. to some process $(A^\infty_t;t\geq 0)$. It is known that under a condition on the measure $\Lambda$ usually known as CDI (`coming down from infinity') \cite{Sch}, $A^\infty_t<\infty$ for all $t>0$ almost surely. 
  We assume this condition is enforced throughout the paper.
Then the two sequences $\left(T^{N,k}\right)_{k=2}^N$ and $\left(\pi_k^N\right)_{k=2}^N$
converge as $N\to\infty$ in the sense of finite-dimensional distributions. In addition, the limiting sequences $\left(T^{k}\right)_{k\geq 2}$ and $\left(\pi_k^\infty \right)_{k\geq 2}$ are respectively the jump times and the embedded chain of the $\Lambda$-coalescent \cite{P, Sag}.\\
\\
Now let us consider the $\Lambda$-Fleming-Viot process as time runs forward. Fix again an initial time, give a distinct allele to each individual present in the population at this time and define $\chA_t$ as the number of alleles present in the population $t$ units of time later. The topic that gave its title to this note is the study of the sequence of extinction times $\left(\hat T^{N,k}\right)_{k=2}^N$, where $\hat T^{N,k}$ is the time at which $\chA$ decreases by crossing or leaving $k$. Similarly as before, we can also define $\hat\rho^N(t)$ the vector of frequencies of these $\chA_t$ alleles at time $t$ and the sequence $\left( \hat\pi_k^N \right)_{k=2}^N$, called the \emph{haplotype block process}, where $\hat\pi_k^N=\hat\rho^N\left(\hat T^{N,k}\right)$ is the embedded chain of $\hat\rho^N$ (with possible repeats). Here again, under the CDI condition, both sequences converge in the sense of finite-dimensional distributions as $N\to\infty$ (see Proposition \ref{prop:conv-fwd}). \\
\\
We show or recall in Theorem \ref{teo-intro1} two distributional identities: for each fixed $t$, $\rho^N(t)$ and $\hat\rho^N(t)$ have the same law (and thus also $A^N(t)$ and $\hat A^N(t)$) and for each fixed $k\in\{2,\ldots,N\}$, $T^{N,k}$ and  $\hat T^{N,k}$ have the same law. We are interested in generalizing these identities from one-dimensional marginals to processes. Note though that the processes $\rho^N$ and $\hat\rho^N$ cannot be equally distributed since the first one remains constant between jump times while the second one does not. We thus focus our study on jump times and embedded chains.

Our first, striking result (Theorem \ref{teo:1}) is that in the binary case where $\Lambda= \Lambda(\{0\})\,\delta_0$, the  process $(A_t^N;t\geq 0)$ counting the number of ancestors backwards in time  and the process $(\hat A^N_t;t\geq 0)$ counting the number of remaining alleles forwards in time, have the same law (where $N\le\infty$). In particular, the sequences $\left(T^{N,k}\right)$ and  $\left(\hat T^{N,k}\right)$ have the same law.

This result triggers two questions.
\begin{itemize}
\item[(1)] Does this result extend to an identity in law between $\pi_k^\infty$ and $\hat\pi_k^\infty$ in the binary case?
\item[(2)] Does this result extend to an identity in law between $(A_t^\infty;t\geq 0)$ and $(\hat A_t^\infty;t\geq 0)$ in the general case?   
\end{itemize}
The answer to Question (1) is `yes', $\pi_k^\infty$ and $\hat\pi_k^\infty$ are equally distributed in the binary case. Their common distribution is the uniform distribution on the simplex of dimension $k-2$. Therefore, the limiting processes $\rho^\infty$ and $\hat\rho^\infty$ jump at the same rate and have the same one-dimensional distribution at each jump time. We saw that they cannot be equally distributed and actually, even their embedded chains $(\pi_k^\infty)_{k\geq 2}$ and $(\hat\pi_k^\infty)_{k\geq 2}$ are not. This can be seen thanks to Theorem \ref{teo-block-type2} where we prove that $\left( \hat\pi_k^\infty \right)_{k\geq 2}$ is a Markov chain whose transitions are elegantly characterized thanks to a random coupon collection procedure which is obviously distinct from the merging procedure of the Kingman coalescent.

The answer to Question (2) is `no' in general, simply because the jumps of $(A_t^\infty;t\geq 0)$ can take arbitrary values in $\Z^-$, while those of $(\hat A_t^\infty;t\geq 0)$ are in most known cases of measures $\Lambda$, all equal to $-1$ a.s. This last property was conjectured by Labb\'e to always hold \cite{L}. Our Proposition \ref{extinction-polya} gives a criterion equivalent to this property based on urn models, that might be helpful in the future to prove this conjecture.

In the next two sections, we state our main results, first in the binary case and then in the general case. The remainder of the paper will be devoted to proving these results.

From now on, we will denote the set $\{1,\ldots,N\}$ by $[N]$ and the set $\N\cup\{0\}$ by $\N_0$. 

\section{The binary case}

In this section, we limit ourselves to the binary case, i.e., the case when $\Lambda=\Lambda(\{0\})\,\delta_0$, corresponding to the Moran process in forward time, and to the Kingman coalescent in backward time. We introduce our main tools and results in this section, and will generalize them in the next one.

\subsection{The Moran model and the $N$-Kingman coalescent.} 
Consider a population of constant size $N$ and assume that at time $t=0$, every individual is assigned a distinct type, say with values in $[N]$.
We assume that the population evolves according to classical Moran dynamics. 
Every ordered pair $(a,b)$ of individuals is equipped with an independent Poisson clock of rate $1$. When a clock associated to a pair $(a,b)$ rings, $a$ gives birth to a new individual inheriting her type and $b$ dies simultaneously. 
Starting from the present, one can trace backwards in time the genealogy of the population.
It is well known \cite{K} that this genealogy is described in terms of the 
$N$-Kingman coalescent.  

\subsection{Block counting and extinction processes}
In the framework introduced previously, we define the \emph{extinction process} as
\be\label{def-extinction} \chA_t  = \# \{\mbox{alleles present at time $t$} \}  \qquad t\geq 0,
\ee
and
\be
\label{def-extinction-time}
  \hat T^{N,k}   = \sup\{t>0 \ : \ \chA_t \ \geq \ k \}   \qquad k\in\{2,\ldots, N\},
 \ee
so that $\hat T^{N,2}$ is the fixation time of the population, i.e., the first time when one of the initial alleles has invaded the whole population.
In the Kingman coalescent, 
define the \emph{block counting process} as 
\be\label{def-block} \cA_t \ = \ \#  \{\mbox{ancestors at time $t$}\}\qquad t\geq 0,
\ee
and
\be
\label{def-block-time}
  T^{N,k}  \ = \ \sup\{t>0 \ : \ \cA_t \ \geq \ k \}\qquad k\in\{2,\ldots, N\},
   \ee
so that $T^{N,k}$ is the first time that the number of blocks goes from $k$ to $k-1$. It is well known that
the block counting process $\cA_t$ is a pure-death process starting from $N$
with transition rate $k(k-1)/2$ from level $k$ to $k-1$,  $k\in\{2,\ldots,N\}$.

The next result states that the sequential decrease of ancestral lineages in the Kingman coalescent has the same law as
the sequential loss of allelic diversity forwards in time.

\begin{teo}\label{teo:1}
For every $N\in\N$, the extinction process $\chA$ and the block counting process $\cA$
are identical in law.
\end{teo}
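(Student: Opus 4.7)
The plan is to show that $\chA$ is a pure-death Markov chain on $\{1,\ldots,N\}$ starting at $N$, with transition rate $k(k-1)/2$ from $k$ to $k-1$; this will identify its law with $\cA$. The central object is the \emph{allele partition} $\Pi_t$ of $[N]$, defined by $i\sim_{\Pi_t} j$ iff individuals $i$ and $j$ carry the same allele at time $t$, so that $\chA_t=|\Pi_t|$, and $(\Pi_t)_{t\geq 0}$ is a Markov process on set partitions. At a Moran event for the ordered pair $(a,b)$ the element $b$ is transferred to the block of $a$, so $|\Pi|$ decreases by one precisely when $b$ is a singleton of $\Pi_t$. The rate at which $|\Pi_t|$ drops from $k$ to $k-1$, given $\Pi_t=\pi$, is therefore proportional to the number of singletons $s(\pi)$, which depends on $\pi$ and not only on $|\pi|$; thus $\chA$ is not Markov in the full Moran filtration, and the task is to show it is nevertheless Markov in its own filtration, with the right rates.

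Let $\mu^N_k$ denote the law of the Kingman coalescent partition on $[N]$ at the first time it has $k$ blocks. I would prove the theorem by establishing that $\mu^N_k$ is the \emph{quasi-stationary distribution} of $(\Pi_t)$ restricted to $k$-block partitions and killed at the inhomogeneous rate $(N-1)\,s(\cdot)$: starting from $\Pi_0\sim\mu^N_k$, the killing time is $\mathrm{Exp}(\lambda_k)$ with $\lambda_k=k(k-1)/2$, and conditionally on no killing the distribution of $\Pi_t$ remains $\mu^N_k$. Granting this, the theorem follows by induction on jumps: the partition at $\hat T^{N,N}$ is $\mu^N_{N-1}$-distributed by exchangeability; at each subsequent jump $\hat T^{N,k}$ the partition is $\mu^N_{k-1}$-distributed; the waiting time to the next jump is an independent $\mathrm{Exp}(\lambda_{k-1})$; and the jump-time sequence $(\hat T^{N,k})$ coincides in law with that of $\cA$.

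The quasi-stationary claim reduces to the eigenvector identity that $\mu^N_k$ is a left eigenvector, with eigenvalue $-\lambda_k$, of the generator of $\Pi$ restricted to $k$-block partitions and killed at rate $(N-1)s$. An essential preliminary is the exchangeable computation $\mathbb{E}_{\mu^N_k}[s]=k(k-1)/(N-1)$, equivalently the fact that a given element of $[N]$ is a singleton under $\mu^N_k$ with probability $\prod_{i=1}^{N-k}\frac{N-i-1}{N-i+1}=\frac{k(k-1)}{N(N-1)}$; this accounts for the correct \emph{average} killing rate. The subtler component is the \emph{balance} between the Moran rearrangement moves within $k$-block partitions (an element moving from one block to another without merger) and the corresponding reverse-Kingman merges, weighted by $\mu^N_k$.

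The main obstacle is this balance identity. The cleanest route I envision is to construct, on the same probability space, the forward Moran process and, for each $t$, the backward Kingman coalescent of the time-$t$ population from the same Moran Poisson clocks, thereby identifying $\Pi_t$ with the terminal partition of the backward genealogy at coalescent time $t$. This coupling yields both that the unconditional law of $\Pi_t$ equals the Kingman coalescent partition at coalescent time $t$, and, upon invoking the classical independence in the Kingman coalescent between the embedded partition sequence and the vector of inter-coalescence times, the quasi-stationary statement above.
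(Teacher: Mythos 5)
Your overall strategy --- showing that $\hat A^N$ is Markov in its own filtration by exhibiting the Kingman jump-chain law $\mu^N_k$ as a quasi-stationary/intertwining law for the allele-partition dynamics killed at block loss --- is legitimate, and the eigenvector identity you posit does appear to be true (one can check it by hand for $N=4$, $k=2$). The genuine gap is in the step where you claim to prove it. The coupling you invoke identifies $\Pi_t$ with the state at backward time $t$ of the coalescent of the time-$t$ population; but for different $t$ these are \emph{different} coalescents, built by reading different stretches of the Moran clocks backwards from different horizons. What this coupling yields is only the one-dimensional statement: for each fixed $t$, $\Pi_t$ is distributed as a Kingman partition at time $t$, hence conditionally on the single event $\{\hat A^N_t=k\}$ one has $\Pi_t\sim\mu^N_k$. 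That is precisely Theorem \ref{teo-intro1}(1), and it does not imply what your induction needs, namely that conditionally on the whole past $(\hat A^N_s)_{s\le t}$ --- equivalently, started from $\mu^N_k$ and conditioned on no block loss --- the law of $\Pi_t$ is still $\mu^N_k$. The independence of the embedded chain and the inter-coalescence times is a statement internal to each fixed-$t$ coalescent and says nothing about how the family of coalescents varies with $t$; upgrading the fixed-time identity to a path statement is exactly the hard part of the theorem, and your final step silently conflates the two. In addition, your induction uses more than the left-eigenvector identity: you also need that, started from $\mu^N_k$, the partition produced \emph{at} the killing time is $\mu^N_{k-1}$-distributed and independent of the killing time (the off-diagonal part of the intertwining); this is asserted but never reduced to anything you prove. (A minor bookkeeping point: with ordered-pair clocks of rate $1$ the killing rate is $(N-1)s(\cdot)$ and $\mathbb{E}_{\mu^N_k}[(N-1)s]=k(k-1)$, which does not match your $\lambda_k=k(k-1)/2$; fix the normalization once and for all.)

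To complete your route you would have to carry out the ``balance'' computation you yourself flag as the main obstacle, i.e.\ verify directly (say, from the explicit product formula for $\mu^N_k$) that $\mu^N_k$ is a left eigenvector of the killed generator with eigenvalue $-k(k-1)$ \emph{and} that its exit law at killing is $\mu^N_{k-1}$; nothing in the proposal circumvents this. For comparison, the paper avoids partition-valued intertwining altogether: it works in the Donnelly--Kurtz look-down, writes $\hat T^{N,k}=e_k+\cdots+e_N$ as a sum of independent exponentials with parameters $\binom{j}{2}$ via the $k$-th fixation line, and obtains the joint law by showing that the position of the $(k-1)$-st fixation line at time $\hat T^{N,k}$ is independent of $(\hat T^{N,i};\,k\le i\le N)$, using only elementary Poisson thinning; your approach, if completed, would give an alternative and arguably more ``forward-in-time'' proof, but as written it assumes the key lemma rather than proving it.
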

This theorem is proved in Section \ref{sect:prrofth1}.

\subsection{The ancestral and haplotype block processes.}
We define  the set of finite mass partitions of $[0,1]$ as the set
\[E  \ = \left\{ u \in \bigcup_{l\in\N} [0,1]^l\ : \ \sum_{i=1}^{\cL(u)} u_i \ = \ 1 \right\}\]
where the $u_i$'s denote the coordinates of the vector $u$ and $\cL(u)$ its length.

Let us define $\pi^{N}_{k}\in E$ as the vector of frequencies of the blocks 
in the $N$-Kingman coalescent at time $T^{N,k}$. Note that $\pi^{N}_{k}$ has exactly $k-1$ coordinates. We call the process $\left( \pi^N_{k}; \ k\in\{2,\ldots,N\} \right)$ the {\it ancestral block process}.
In the next theorem, $\D_{k}$ will refer to the random mass partition induced by a Dirichlet random variable with parameter $\underbrace{(1,\ldots,1)}_{k \ \mbox{times}}$, that is the uniform distribution on the simplex of dimension $k-2$.
We recall the next result.

\begin{teo}[The ancestral block process, Kingman \cite{K}]\label{teo-block-ancestral}
As $N\to\infty$
\[ \left( \pi_k^N \right)_{k=2}^N \Longrightarrow  \left( \pi_k^\infty \right)_{k=2}^\infty,  \ \ \mbox{in the sense of f.d.d.}\]
where the limiting vector is uniquely determined by the property that for every $K\in\N$,
the chain $\left( \pi^\infty_{K-k}; \ k\in\{0, \ldots, K-2\} \right)$ is a Markov chain such that
\begin{enumerate}
\item The initial distribution $\pi^\infty_K$ is distributed as $\D_{K-1}$.
\item The mass partition at time $k$
is obtained from the mass partition at time $k-1$ by merging two blocks chosen uniformly at random among all possible pairs of blocks.
\end{enumerate}
\end{teo}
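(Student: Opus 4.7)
The plan is to exploit the simple coalescence rates and exchangeability of the $N$-Kingman coalescent. The argument separates naturally into a finite-$N$ Markov structure, a convergence result for the initial distribution at level $K$, and a passage to the limit.

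First I would record the finite-$N$ Markov structure. In the $N$-Kingman coalescent every pair of blocks coalesces at rate $1$, independently of block sizes, so conditional on the mass partition $\pi^N_k$ the coalescence event at time $T^{N,k-1}$ merges two of the $k-1$ current blocks chosen uniformly among the $\binom{k-1}{2}$ pairs. Hence for every $N\geq K$, $(\pi^N_{K-k})_{k=0,\ldots,K-2}$ is a Markov chain with exactly the announced transition kernel, so its law is entirely determined by the law of the starting value $\pi^N_K$.

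Next I would identify the limiting initial distribution. At time $T^{N,K}$ the coalescent has $K-1$ blocks summing to $N$; the claim is that the normalized block sizes converge weakly to $\D_{K-1}$ as $N\to\infty$. One way to see this is via Kingman's paintbox representation, which lets one identify the asymptotic frequencies at level $K-1$ with the spacings of $K-2$ i.i.d.\ uniform points on $[0,1]$, i.e.\ a Dirichlet$(1,\ldots,1)$ vector with $K-1$ components. Combining this with the $N$-independent transition kernel from the first step and with the continuity of the merging operation on mass partitions yields convergence of $(\pi^N_{K-k})_{k=0,\ldots,K-2}$ in the sense of f.d.d.\ to a Markov chain with the prescribed initial law and transitions; uniqueness of the limit is automatic from the Markov structure.

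The genuinely non-trivial ingredient will be the convergence of the normalized block frequencies at a fixed coalescence level to $\D_{K-1}$. This fact is classical but not entirely transparent. Besides the paintbox route sketched above, one may argue by induction on $K$ using the Dirichlet conjugacy that merging two uniformly chosen components of a Dirichlet$(1,\ldots,1)$ vector of length $K$ again produces a Dirichlet$(1,\ldots,1)$ vector of length $K-1$, combined with tightness of the sequence $(\pi^N_K)_N$ on the simplex. Either route closes the proof.
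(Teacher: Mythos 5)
A preliminary remark: the paper does not prove this statement at all --- it is recalled as a classical result of Kingman \cite{K} (see also Remark \ref{rmk1} and \cite{BG}) --- so there is no internal proof to compare yours with, and I can only assess your outline on its own terms. Your outline is indeed the standard route: (i) for the $N$-Kingman coalescent the embedded frequency chain $\left( \pi^N_{K-k}\right)_k$ is Markov with the uniform-pair-merging kernel, because each pair of blocks merges at rate $1$ irrespective of block sizes; (ii) this kernel does not depend on $N$ and acts continuously on frequency vectors with a fixed number of parts, so f.d.d. convergence reduces to convergence of the single marginal $\pi^N_K$ to $\D_{K-1}$; (iii) that marginal convergence is Kingman's classical computation. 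Points (i) and (ii) are correct as you state them, and leaving (iii) as a citation is defensible given that the paper itself only cites Kingman.

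Two caveats, however. First, your fallback argument for (iii) --- induction on $K$ using the Dirichlet conjugacy (merging a uniformly chosen pair of coordinates of a Dirichlet$(1,\ldots,1)$ vector of length $K$ gives a Dirichlet$(1,\ldots,1)$ vector of length $K-1$) together with tightness --- does not close. The merging map is many-to-one, and the only available base case, $K=2$, is deterministic and is the image of very many level-$K$ laws; consistency under merging therefore cannot identify a subsequential limit of $\left(\pi^N_K\right)_N$ unless you already know the limit at the higher level, which is what you are trying to prove. Drop or repair this alternative. Second, if you go through the paintbox route you should either quote the exact finite-$N$ fact (when the $N$-coalescent has $K-1$ blocks, the vector of block sizes is uniform over compositions of $N$ into $K-1$ positive parts, whose normalization converges to the uniform law on the simplex, i.e. to the spacings of $K-2$ i.i.d. uniforms), or spell out the coupling between the $N$-coalescent and the restriction of the infinite coalescent to $[N]$: the time at which the restricted partition first has $K-1$ blocks is not the time $T^K$ at which the full partition does, so identifying $\pi^N_K$ with empirical frequencies of the infinite coalescent's blocks at level $K-1$ requires a short additional argument.
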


\begin{rmk}\label{rmk1}
This sequence was considered by Bertoin and Goldschmidt, case $k=1$ in \cite{BG}. In particular, the chain $\left( \pi^\infty_{k}; \ k\in\{2, \ldots, K\} \right)$ is also Markovian: The mass partition at time $k$
is obtained from the mass partition at time $k-1$ by fragmenting uniformly one block chosen as a size-biased pick from the mass partition at time $k-1$.
\end{rmk}

Let us now draw a connection with the forward dynamics. In the Moran model, we can partition the population into blocks of individuals carrying the same type. Analogously to the ancestral block process, we define $\hat \pi^{N}_{k}$ to be the mass partition induced by the types at time $\hat T^{N,k}$. Note that $\hat\pi^{N}_{k}$ has at most $k-1$ coordinates. 
We call the process $\left( \hat \pi^N_{k}; \ k\in\{2, \ldots, N\} \right)$ the {\it  haplotype block process}.

In order to describe our next result, we will need some further notation. 
First, we say that a mass partition is {\it non-degenerate} if all the coordinates are strictly positive. For a given 
mass partition $m$, we can think of $m$ as a discrete probability distribution on $[\cL(m)]$. 
Then let $(Y_n; n\geq 1)$ be an infinite sequence 
of i.i.d. random variables with distribution $m$ and define 
\[T \ = \ \inf\{n\geq1 : \ \forall i\in [\cL(m)], \exists k\in[n], \  Y_k = i    \}, \]
which is a.s. finite if $m$ is non-degenerate and taken otherwise equal to $+\infty$ by convention otherwise.
In the context of the coupon collector problem, $T$ is the time needed to collect all the coupons $1,\ldots, \cL(m)$
where the probability to draw coupon $i$ is equal to $m_i$. 
In light of this observation, for any non-degenerate $m$ we define
\[B_i^{(m)} \ = \ \sum_{k=1}^{T-1} {1}_{Y_k=i}\qquad i\in[\cL(m)], \]
as well as $J^{(m)} = Y_T$, so that $J^{(m)}$ is the unique $i$ such that $B_i^{(m)}=0$.
Then we refer to the random vector
\[B^{(m)} \ = \ \left(B_i^{(m)}; i\in[\cL(m)], i\not= J^{(m)}\right) \]
 as the {\it random coupon collection} 
associated to $m$. 

\begin{teo}[The haplotype block process]\label{teo-block-type}
As $N\to\infty$,
\[ \left( \hat \pi_k^N \right)_{k=2}^n \Longrightarrow  \left( \hat \pi_k^\infty \right)_{k=2}^\infty \mbox{in the sense of f.d.d.} \]
where the limiting vector is uniquely determined by the property that for every $K\in\N$,
the chain $\left( \hat  \pi^\infty_{K-k}; \ k\in\{0, \ldots, K-2\} \right)$ is a Markov chain such that
\begin{enumerate}
\item The initial distribution $\hat \pi^\infty_K$ is distributed as $\D_{K-1}$.
\item The mass partition at time $i$
is obtained from the mass partition at time $i-1$ as follows. 

Conditional on  $\hat \pi^\infty_{K-i}$, generate $(X_i)$ distributed as 
the random coupon collection associated to the mass partition $\hat \pi^\infty_{K-i}$. 

Conditional on $X_i$, the block process at step $i+1$ (i.e., $\hat\pi^\infty_{K-(i+1)}$)
is distributed as a Dirichlet random variable with parameters $(X_i)$.
\end{enumerate}
\end{teo}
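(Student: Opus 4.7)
The plan is to exploit the Markov structure of $(\hat\pi_k^N)$ in finite $N$. By the strong Markov property of the Moran process at the stopping times $\hat T^{N,k}$ together with exchangeability of types, $(\hat\pi_k^N)_{k=2}^N$ is a Markov chain when read in the direction of forward time (i.e., from $k=N$ down to $k=2$), so f.d.d.\ convergence reduces to convergence of the initial distribution $\hat\pi^N_K$ together with convergence of the one-step transition kernel. The initial distribution converges to $\D_{K-1}$ by combining Theorem \ref{teo-block-ancestral} with the jump-time identity $\hat\pi^N_K \stackrel{d}{=} \pi^N_K$ (the discrete analogue of $\hat\rho^N(t)\stackrel{d}{=}\rho^N(t)$ announced in the introduction, derivable from the same coupling that underlies Theorem \ref{teo:1}).

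The main task is then to identify the one-step transition in the limit. Conditioning on $\hat\pi^N_{K-i}=p=(p_1,\ldots,p_n)$ with $n=K-i-1$, one must describe the frequency vector at the next extinction time $\hat T^{N,K-(i+1)}$. My strategy is to use the Donnelly--Kurtz lookdown construction of the Moran model: individuals sit at levels $1,2,\ldots$, each pair $i<j$ carries an independent rate-one Poisson clock, and when the clock rings the type at level $j$ is replaced by the type at level $i$. Under this representation an allele is identified with its set of currently occupied levels, and an extinction event corresponds to one such set becoming empty.

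The coupon-collection picture then emerges by reading off the type at the \emph{lower} index of successive lookdown firings: for large $N$ these types form a sequence that is close to i.i.d.\ with law $p$. The first allele to go extinct after $\hat T^{N,K-i}$ is the one whose type is sampled last --- its occupied levels keep being depleted, without replenishment, until the first firing at which its own type is read at a lower level. This couples the extinction mechanism with the coupon collector of the statement: the identity of the extinct allele is $J^{(p)}$, and the number of low-level ``reads'' in favor of the $i$-th surviving allele up to the extinction moment is the coupon count $B_i^{(p)}$.

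The last step, and main technical obstacle, is to show that \emph{conditional} on $X=B^{(p)}$, the vector $\hat\pi^\infty_{K-(i+1)}$ of surviving-block frequencies is Dirichlet with parameters $X$. The plan here is a P\'olya-urn embedding: once each surviving allele $i$ has acquired $X_i$ anchors at low levels during the coupon-collection phase, the continuing lookdown dynamics on the $n-1$ surviving alleles reduces asymptotically to a P\'olya urn with initial composition $X$, whose limiting frequencies are exactly Dirichlet with parameters $X$. Separating cleanly the coupon-collection events from the subsequent reinforcement events in the lookdown, and verifying that the conditional-independence structure in the statement survives the $N\to\infty$ limit, is the delicate part of the argument.
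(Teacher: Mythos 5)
Your outline is, in broad strokes, the same route the paper takes (Theorem \ref{teo-block-type} is proved there as the case $\Lambda=\Lambda(\{0\})\,\delta_0$ of Theorem \ref{teo-block-type2}, via the look-down, a coupon-collector step and a P\'olya-urn step), but the step you yourself flag as ``the delicate part'' is exactly where the proof lives, and your substitute for it does not work as stated. In the paper the coupon-collector structure is not an asymptotic statement about the types read off at the lower level of successive firings being ``close to i.i.d.''; it is an \emph{exact spatial} statement at the extinction time, and the ingredient that makes it exact is the exchangeability/intertwining result (Proposition \ref{prop-inter}, proved by a Rogers--Pitman intertwining, together with its consequence Lemma \ref{lem:1}): conditional on the type counts below the fixation line, the assignment of types to levels is uniform, so that conditionally on $\hat\pi^\infty_{K-i}$ the types carried by levels $1,2,3,\ldots$ at time $\hat T^{K-i}$ are exactly i.i.d.\ with that law. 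The next fixation line then sits at the first level by which all surviving types have appeared, so the coupon counts $X_i$ are the numbers of levels (equivalently, of ancestors) strictly below it occupied by each surviving allele --- a fixed-time spatial quantity, not the number of ``low-level reads'' in your temporal picture, which is a different object. The same proposition is also what delivers the conditional Dirichlet law: given $X$, the particle system below the new fixation line is distributed as $\P_X$ and its type counts evolve exactly as a P\'olya urn started from $X$, while the strong Markov property (independence of the LD events after $\hat T^{K-i}$) gives the required conditional independence. Without Proposition \ref{prop-inter}, or an equivalent exchangeability lemma, neither the exact coupon-collection law nor the Dirichlet transition is established, so the gap you acknowledge is a missing idea, not a routine verification.

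Two further points. Your derivation of the initial law $\D_{K-1}$ rests on the identity in law of $\hat\pi^N_K$ and $\pi^N_K$ at the random jump times, which is not what Theorem \ref{teo-intro1} gives (that identity is at fixed deterministic times) and does not follow from Theorem \ref{teo:1}, which concerns only the counting processes; the paper instead obtains $\D_{K-1}$ directly as the almost-sure limit of the P\'olya urn formed by the type counts below the fixation line started from $K-1$ singleton colors. Also, the look-down you describe (level $j$ copies level $i$, no relabelling) is the unmodified construction, whereas the paper's argument uses the modified look-down with upward shifts, which is what makes fixation lines, and hence the whole ``first level containing all surviving types'' picture, available. These are repairable, but the gap in the previous paragraph is the substantive one.
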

The preceding result is a special case of Theorem \ref{teo-block-type2}, which will be proved in Section \ref{sect:proof-bt}.

\begin{rmk}
The vector $\hat\pi^\infty_k$ gives the frequencies of all the alleles present at time $\hat T^{N,k}$. For each of these alleles, the subpopulation carrying this allele at time $\hat T^{N,k}$ has a certain number of ancestors living at time $\hat T^{N,k+1}$. This number is precisely the number of balls (or coupons) corresponding to this allele in the random coupon collection procedure mentioned in Theorem \ref{teo-block-type}.
\end{rmk}

\begin{rmk} From Theorems \ref{teo-block-type} and \ref{teo-block-ancestral}, we observe that the two mass partitions $\pi^\infty$ and $\hat \pi^\infty$ have the same one-dimensional marginal distributions but certainly have different probability transitions. In relation to Remark \ref{rmk1} and the work by Bertoin and Goldschmidt \cite{BG}, an open question remains as to whether $\left( \hat  \pi^\infty_{k}; \ k\in\{2, \ldots, K\} \right)$ is Markovian.
\end{rmk}

\section{The general case}

\subsection{$\Lambda$-coalescent and $\Lambda$-Fleming-Viot (FV) process}

We now turn to more general exchangeable population models.
Let $\Lambda$ be a finite measure on $[0,1]$. 
Consider a population of constant size $N$ evolving according to the following dynamics. 
As before, at time $t=0$, we assign a distinct allele to each individual in the population.  Each $k$-tuple ($2 \leq k\leq N$)
of individuals is equipped with a Poisson clock with rate
\[\lambda^N(k) \ := \ \int_0^1 x^{k-2}(1-x)^{N-k} \Lambda(dx).  \]
When the clock associated with a group of $k$ individuals rings, one of them is picked uniformly at random among them, gives birth to $k-1$ new individuals inheriting her type, while the $k-1$ others die simultaneously.
We call such a process a $(N,\Lambda)$-Fleming-Viot (FV) process. The case $\Lambda = \Lambda(\{0\})\,\delta_0$ corresponds to the classical Moran model presented above.

It is well known that the genealogy of the population can be described in terms of the $(N,\Lambda)$-coalescent, i.e.,
the projection of the $\Lambda$-coalescent on $[N]$ \cite{BLG,DK}.

\subsection{The block counting and extinction processes, continued} For the $(N,\Lambda)$-FV process and $(N,\Lambda)$-coalescent,
we define the extinction process $\chA$ and the block counting process $\cA$ as in (\ref{def-extinction}) and (\ref{def-block}) respectively, along with their jump times $\hat T^{N,k}$ and $T^{N,k}$ as in (\ref{def-extinction-time}) and (\ref{def-block-time}), respectively. The next result gives some identities between one-dimensional marginals
of the block counting and the extinction processes in the general case.

\begin{teo}\label{teo-intro1} Let $N\in {\mathbb N}$.
\begin{enumerate}
\item[(1)]  For every fixed $ t>0$, $\cA_t$ and $\chA_t$ have the same law.
\item[(2)](H\'enard \cite{H}) For every fixed $k\in\{2,\ldots,N\}$,    $T^{N,k}$ and  $\hat T^{N,k}$ have the same law.
\end{enumerate}
\end{teo}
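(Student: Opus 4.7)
The plan is to establish part (1) via a stationarity argument applied to a graphical construction of the $(N,\Lambda)$-Fleming-Viot process, and to deduce part (2) from part (1) by monotonicity.

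For part (1), I would put the $(N,\Lambda)$-Fleming-Viot process in its stationary regime on the whole real line, which can be realized e.g.\ from a Poisson point process representation of reproduction events or, equivalently, via the Donnelly--Kurtz look-down construction. For any $s\leq s'$ this representation defines a random ``ancestry map'' $\phi_{s,s'}:[N]\to[N]$ sending each individual alive at time $s'$ to its unique ancestor alive at time $s$. Since every individual at time $0$ carries a distinct allele, an allele survives until time $t>0$ if and only if the corresponding time-$0$ individual has at least one descendant at time $t$; hence
\[
\chA_t \ = \ |\mathrm{Image}(\phi_{0,t})|,\qquad \hat\rho^N(t) \text{ lists the values } |\phi_{0,t}^{-1}(j)|/N,\ j\in\mathrm{Image}(\phi_{0,t}).
\]
Tracing ancestries backwards from time $0$ to time $-t$ in the same realization produces the $(N,\Lambda)$-coalescent at time $t$, and so
\[
\cA_t \ = \ |\mathrm{Image}(\phi_{-t,0})|,\qquad \rho^N(t) \text{ lists the values } |\phi_{-t,0}^{-1}(j)|/N,\ j\in\mathrm{Image}(\phi_{-t,0}).
\]
Stationarity in time of the driving Poisson reproduction events yields $\phi_{0,t}\stackrel{d}{=}\phi_{-t,0}$, since both encode the genealogy over a time-window of length $t$. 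Part (1) is then immediate; one also recovers the stronger identity $\rho^N(t)\stackrel{d}{=}\hat\rho^N(t)$ promised in the discussion preceding the statement.

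For part (2), both $t\mapsto\cA_t$ and $t\mapsto\chA_t$ are right-continuous non-increasing processes with values in $\{1,\ldots,N\}$. For any such process $X$, the definition $\tau_k=\sup\{t>0:X_t\geq k\}$ reduces to the deterministic identity $\{\tau_k\leq t\}=\{X_t<k\}$. Therefore
\[
\P(T^{N,k}\leq t) \ = \ \P(\cA_t<k) \ = \ \P(\chA_t<k) \ = \ \P(\hat T^{N,k}\leq t),
\]
where the middle equality is part (1). This shows $T^{N,k}\stackrel{d}{=}\hat T^{N,k}$.

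The only delicate step in this plan is the construction of a stationary realization on which the two ancestry maps $\phi_{0,t}$ and $\phi_{-t,0}$ are simultaneously defined with the correct marginal laws; I do not anticipate serious obstacles, since for finite $N$ the graphical/look-down construction of $\Lambda$-Fleming-Viot processes is classical. A pleasant by-product is that the same argument delivers the stronger identity in law between the full mass-partition valued random variables $\rho^N(t)$ and $\hat\rho^N(t)$, not just between their integer-valued reductions.
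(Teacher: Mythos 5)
Your part (1) is essentially the paper's own proof: the paper argues by stationarity of the Fleming--Viot process that the number of time-$0$ ancestors of the time-$t$ population (which equals $\hat A^N_t$, since those ancestors carry distinct alleles) has the same law as $A^N_t$, which is exactly your identity $|\mathrm{Image}(\phi_{0,t})|\stackrel{d}{=}|\mathrm{Image}(\phi_{-t,0})|$ from time-homogeneity of the driving Poisson events; the paper also records, as you do, that the same argument gives $\rho^N(t)\stackrel{d}{=}\hat\rho^N(t)$. Where you genuinely differ is part (2): the paper does not prove it at all, but simply attributes it to H\'enard \cite{H}. Your deduction of (2) from (1) is correct and self-contained: since both $A^N$ and $\hat A^N$ are non-increasing, right-continuous and integer-valued, the last-passage definitions \eqref{def-extinction-time} and \eqref{def-block-time} give the pathwise identities $\{T^{N,k}\le t\}=\{A^N_t<k\}$ and $\{\hat T^{N,k}\le t\}=\{\hat A^N_t<k\}$ (right-continuity is what forces $A^N_{T^{N,k}}<k$, so the equivalence holds even at the passage time itself, and multiple mergers that skip level $k$ cause no trouble), whence the distribution functions coincide by (1). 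Since the statement of (2) — and its only use later in the paper, in the proof of Proposition \ref{prop:conv-fwd} — concerns one-dimensional marginals for fixed $k$, this elementary tail/CDF argument fully suffices, and it buys a proof of (2) without invoking the external reference; what it does not give, of course, is any joint-law information in $k$, which is where the deeper fixation-line analysis (H\'enard, and the paper's Theorem \ref{teo:1} in the binary case) is actually needed.
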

\begin{proof} We only prove (1).
Consider the population at time $t>0$. By stationarity of the Fleming-Viot process, the number of ancestors $a^N_t$ at time 0 of this population is equal in distribution to $\cA_t$. Since each of these $a_t^N$ ancestors carries a distinct allele, the number of alleles present in the population at time $t$ is $a^N_t$, that is $\chA_t = a^N_t$. This shows that $\chA_t$ has the same law as $\cA_t$. The same argument actually shows that the identity also holds for the frequencies of descendances (see Introduction). 
\end{proof}
In the light of Theorem \ref{teo-intro1}, 
it is natural to conjecture that the block counting process and the extinction process are identical in law for any measure $\Lambda$. 
We shall now see that this is not the case in general. In order to see that, we  will let the size of the population go to $\infty$.

In the following, we will be interested in the particular case where for every $k\geq2$, $(T^{N,k}, \hat T^{N,k}; N\geq0)$ is a tight sequence of random variables as $N\to\infty$.
This corresponds to the case where the $\Lambda$-coalescent {\it comes down from infinity} (CDI) which is equivalent to the following condition \cite{Sch}
\begin{equation}\label{eq:cond}
\int^\infty_1 \frac{1}{\psi(u)} du <\infty, \  \ \mbox{where} \ \psi(u) \ = \ \Lambda(\{0\}) u^2 \ + \ \int_{(0,1)} (e^{-xu} - 1 + xu) \frac{\Lambda(dx)}{x^2}. 
\end{equation}
\begin{prop} 
\label{prop:conv-fwd}

Under condition \eqref{eq:cond}, there exist two processes $A^\infty$ and ${\hat A}^\infty$ coming down from infinity such that
as $N\to\infty$, $\cA$ and $\chA$ converge in the sense of f.d.d. to $A^\infty$ and ${\hat A}^\infty$ respectively. In addition, the sequences $(T^{N,k})_k$ and $(\hat T^{N,k})_k$ converge in the sense of f.d.d. to the corresponding jump times of $A^\infty$ and ${\hat A}^\infty$, respectively.
\end{prop}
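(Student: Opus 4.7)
The plan is to prove the two convergence statements in parallel, building in each case a coupling of the whole family of processes over $N$ on a single probability space and then passing to a monotone limit. For the backward process this is classical; for the forward process it is the crux of the argument.

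\emph{Backward direction.} I would realise all the $(N,\Lambda)$-coalescents simultaneously as the $[N]$-restrictions of the $\Lambda$-coalescent $\Pi=(\Pi_t)_{t\ge 0}$ on $\N$ \cite{P,Sag}. In this pathwise coupling, $\cA_t=\#(\Pi_t|_{[N]})$ is nondecreasing in $N$ and converges almost surely to $A^\infty_t:=\#\Pi_t$; condition \eqref{eq:cond} guarantees that $A^\infty_t<\infty$ almost surely for every $t>0$ \cite{Sch}. Joint monotone convergence over finitely many times gives the f.d.d.\ convergence of $\cA$ to $A^\infty$, and $T^{N,k}=\sup\{t:\cA_t\ge k\}\uparrow T^{\infty,k}:=\sup\{t:A^\infty_t\ge k\}$ almost surely in the same coupling.

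\emph{Forward direction.} I would build an analogous coupling of the exchangeable $(N,\Lambda)$-FVs, each started with pairwise distinct alleles on its initial population, using the Bertoin--Le Gall flow of $\Lambda$-bridges \cite{BLG} (or an equivalent ``exchangeable look-down''). In such a coupling, adding the $(N+1)$-st particle at time $0$ can only reveal an allele already present among the first $N$ at time $t$ or add a genuinely new one, so $\chA_t$ is nondecreasing in $N$, and its almost sure monotone limit $\hat A^\infty_t$ is a.s.\ finite for every $t>0$ since, by Theorem~\ref{teo-intro1}(1) combined with the backward step, it has the same marginal law as $A^\infty_t$. Joint monotone convergence then yields f.d.d.\ convergence of $\chA$ to $\hat A^\infty$, and $\hat T^{N,k}\uparrow \hat T^{\infty,k}:=\sup\{t:\hat A^\infty_t\ge k\}$ by the same argument as for $T^{N,k}$.

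\emph{Main obstacle.} The delicate point is in the forward direction: one must exhibit a coupling of the exchangeable $(N,\Lambda)$-FVs across $N$ that both respects the exchangeable dynamics at every finite $N$ and is monotone in the functional $\chA$. A naive ``lowest-level-wins'' look-down does not restrict to the exchangeable $(N,\Lambda)$-FV on $[N]$ -- an explicit finite-$N$ computation (already visible for $N=3$ with $\Lambda$ supported on $(0,1)$) shows that the mass-partition dynamics of the restricted look-down differs from the exchangeable ones -- so it cannot be used directly. The flow-of-bridges construction supplies the right exchangeable-consistent framework, after which the remaining monotone-convergence steps exactly parallel those of the backward direction.
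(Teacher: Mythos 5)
Your backward step is fine and matches what the paper does implicitly, but the forward step — which you yourself identify as the crux — contains a genuine gap, and the reason you give for discarding the look-down is mistaken. It is true that the look-down restricted to the first $N$ levels is not the exchangeable $(N,\Lambda)$-FV as a labelled particle system (lowest level reproduces, the top levels are pushed out), and its mass partition alone is not even an autonomous Markov chain; your $N=3$ observation only shows this labelled discrepancy. But the point used by the paper is the Donnelly--Kurtz intertwining (Theorem 1.1 of \cite{DK}, together with \cite{H}): averaged over the exchangeable arrangement of types on levels, the type-frequency functionals of the restricted look-down — in particular the extinction process $\hat A^N$ and the extinction times $\hat T^{N,k}$ — have exactly the same law as for the $(N,\Lambda)$-FV. (The paper's Proposition \ref{prop-inter} is the discrete-time analogue of this mechanism.) So the look-down \emph{can} be used directly, and it is the natural coupling here: since types never move downward, $\hat T^{N,k}=\inf\{t: L^k_t\geq N+1\}$ is automatically nondecreasing in $N$ with limit $\hat T^k=\sup\{t:L^k_t<\infty\}$, and the identification of the law of $\hat T^k$ with the coalescence time $T^k$ (finite under \eqref{eq:cond}) follows from Theorem \ref{teo-intro1}(2); one then sets $\hat A^\infty_t=\sup\{k:\hat T^k>t\}$ and checks there are no fixed discontinuities. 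This is the paper's proof.

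By contrast, the substitute you propose is not actually constructed. The flow of bridges of \cite{BLG} encodes the measure-valued $\Lambda$-FV and the $\Lambda$-coalescent genealogies of exchangeable samples; it does not obviously contain, consistently across $N$, the fixed-size-$N$ population models of the paper (in which the $k-1$ non-parental members of the chosen $k$-tuple die), nor a canonical notion of ``the first $N$ individuals at time $t$'' whose allele count would be the $(N,\Lambda)$-FV extinction process. Your key claim — that in such a coupling adding the $(N+1)$-st initial particle makes $\hat A^N_t$ nondecreasing in $N$ — is therefore asserted rather than proved; it is precisely the nested-prefix property that the look-down gives for free and that you have discarded. As it stands, the forward direction of your argument rests on a coupling whose existence is the whole difficulty, so the proof is incomplete; replacing the flow-of-bridges step by the look-down coupling plus the distributional identities of \cite{DK,H} (as in the paper) repairs it.
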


This proposition will be proved in Section \ref{sect:fixation-coalescence-lines} where the previous statement is reformulated in terms of the Look Down process.

\begin{teo}[Labb\'e \cite{L}, Theorem 1.6]\label{teo:labbe} If $\Lambda$ is such that {\it $\psi$ is regularly varying at $\infty$ with index in $(1,2]$}
then there are no simultaneous extinction events in the $\Lambda$-Fleming Viot process, i.e.,
\be\label{eq:simu-ext}{\mathbb P}\left( \exists t > 0 \ : \ |{\hat A}^{\infty}_{t} - {\hat A}^{\infty}_{t^-}| \geq 2 \right) \ = \ 0.\ee
\end{teo}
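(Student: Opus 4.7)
The plan is to use the Donnelly--Kurtz look-down representation of the $(\infty,\Lambda)$-Fleming-Viot process: labels range over $\N$, and at each atom $(t,p)$ of a Poisson random measure $\M$ on $(0,\infty)\times(0,1)$ of intensity $dt\otimes p^{-2}\Lambda(dp)$, every label is independently marked with probability $p$ and all marked labels inherit the type of the smallest marked one. Starting from distinct initial types, $\hat A^\infty_t$ equals the number of types still present at time $t$.

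A simultaneous extinction at an atom $(t,p)\in\M$ of magnitude at least $2$ requires the entire carrier sets of at least two distinct extant types at time $t^-$ to be included in the set of marked labels, with the smallest marked label belonging to yet a third extant type. Under the CDI assumption one has $\hat A^\infty_{t^-}<\infty$ almost surely; the regular-variation hypothesis on $\psi$ with index in $(1,2]$ would then provide, via Berestycki--Berestycki--Schweinsberg type estimates, quantitative control on the descent from infinity of the dual coalescent, from which I would deduce that every extant type at time $t^-$ has a strictly positive frequency, so that its carrier set is an infinite subset of $\N$ of positive asymptotic density.

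Given this, for any $p<1$, Borel--Cantelli applied to independent Bernoulli-$p$ trials on an infinite set shows that infinitely many elements are left unmarked almost surely, so marking every carrier of any extant type has conditional probability zero. Conditioning on the history up to $t^-$ (which determines the extant carrier sets) and using the conditional independence of the Bernoulli marks at time $t$ from that history, the conditional probability of a simultaneous extinction at $(t,p)$ is zero. Since $\M$ has only countably many atoms almost surely (its intensity is $\sigma$-finite away from $p=0$), countable additivity concludes that simultaneous extinctions do not occur anywhere in $(0,\infty)$.

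The main obstacle will be to verify rigorously the claim that every extant type at every $t>0$ has a positive-density carrier set: this is where the regular-variation hypothesis does genuine work, by ruling out degenerate situations in which some extant allele would have only finitely many, or "dust-like," carriers and could therefore have all of its carriers marked with positive probability. A related subtlety is the possible accumulation of atoms of $\M$ with $p$ close to $1$ near a given time; the asymptotic regularity forced by $\psi$ being regularly varying with index in $(1,2]$ is precisely what excludes pathological scenarios in which such an accumulation could wipe out multiple alleles at a common time.
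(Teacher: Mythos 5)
This statement is quoted by the paper from Labb\'e (Theorem 1.6 of \cite{L}); the paper offers no proof of it, so your attempt can only be judged on its own terms, and it has a fundamental gap: you have misidentified what an extinction event of ${\hat A}^\infty$ is. In the infinite-population look-down (or in the $\Lambda$-FV frequency process), a type never disappears \emph{at} an atom $(t,p)$ of the driving Poisson measure when $p<1$: if the type has frequency $x>0$ at $t^-$, its frequency after the reproduction event is at least $(1-x)$-independently bounded below by $(1-p)x>0$; equivalently, in the look-down a fixation line $L^k$ moves up by the finite amount $\#\bigl((A\setminus\min A)\cap[L^k_{t-}]\bigr)$ at a single LD-event and so cannot reach $+\infty$ there. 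Thus the scenario your Borel--Cantelli step excludes (a single atom marking the entire carrier set of an extant type) is already impossible for trivial reasons, and excluding it proves nothing. The actual extinction time $\hat T^k$ is the \emph{explosion} time of the fixation line $L^k$, i.e.\ an accumulation point of infinitely many LD-events, at which the type's frequency hits $0$ as a limit rather than by a jump. ``Simultaneous extinction'' means $\hat T^{k+1}=\hat T^{k}$ for some $k$, an event about coinciding explosion times that is not localized at any single atom, so your countable union over atoms of the Poisson measure does not even cover the event \eqref{eq:simu-ext}.

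Consequently, the whole analytic content of the theorem --- showing that two fixation lines cannot explode (equivalently, two allele frequencies cannot vanish) at the same time, which is precisely where the regular-variation hypothesis on $\psi$ with index in $(1,2]$ enters through quantitative speed-of-coming-down-from-infinity estimates --- is exactly the part you defer as ``the main obstacle.'' Even if you established that every extant type has an infinite, positive-density carrier set at every fixed $t$, this would not address the behaviour at the (random, uncountably indexed) candidate times of simultaneous vanishing. To repair the argument you would need to work with the frequency processes near their hitting times of $0$ (or with the fixation lines near explosion), which is the route taken in \cite{L}; the atom-by-atom marking argument cannot be patched into a proof of \eqref{eq:simu-ext}.
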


\begin{con}[Labb\'e \cite{L}]\label{conj:labbe} (\ref{eq:simu-ext})  holds for any $\Lambda$--FV in the CDI class (i.e., satisfying \eqref{eq:cond}).
\end{con}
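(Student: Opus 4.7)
The plan is to exploit the two tools the paper has already developed: (i) the equivalent urn-model criterion of Proposition~\ref{extinction-polya}, which reformulates simultaneous extinction as a statement about a P\'olya-type process driven by the $\Lambda$-coalescent; and (ii) the lookdown representation of the $\Lambda$-FV process, in which alleles can be tracked individually as functions of the underlying Poisson structure.

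First I would translate the event in~(\ref{eq:simu-ext}) into the language of the lookdown. Under the CDI condition~\eqref{eq:cond}, at each time $t>0$ there are a.s.\ finitely many extant alleles, and each such allele corresponds bijectively to one ancestral lineage at time $0$. An allele goes extinct at time $t$ when its lineage is merged into another in a reproduction event; two alleles go extinct simultaneously iff a single $\Lambda$-event absorbs two lineages whose descendants just before $t$ were singleton carriers of two distinct alleles. The goal thus becomes: show that, with probability one, no such double coincidence occurs at any $t>0$.

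Next I would apply the urn criterion. Running the extinction process forward from a generic time $s>0$ (at which, by CDI, only finitely many alleles are present) and using Proposition~\ref{extinction-polya}, the event of a simultaneous double extinction is equivalent to a specific event in the associated P\'olya-type urn: the disappearance, in one step, of two colors at once. Because the transitions of the haplotype block process at successive extinction times are driven by a random coupon-collection procedure on a Dirichlet-type mass partition (generalizing Theorem~\ref{teo-block-type} to the $\Lambda$-case via Theorem~\ref{teo-block-type2}, whose one-dimensional marginals are almost surely non-degenerate), the problem reduces to verifying that the coupon collector on an almost-surely non-degenerate mass partition finishes by collecting the last coupon of a single color, an event which holds a.s.\ whenever the mass partition has no atom at $0$ on the relevant simplex.

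The main technical obstacle, and the reason this remains a conjecture in general, is controlling what happens at $\Lambda$-events of large parameter $x$. Two phenomena must be reconciled: the spatial correlation between the random masses occupied by different alleles in the continuous lookdown, and the possibility that several small-frequency alleles coexist just before a large-$x$ event. My plan would be to show, by a careful absolute-continuity argument conditional on the coalescent skeleton, that the joint law of the frequencies of any two alleles on the brink of extinction places no mass on the ``simultaneous-killing'' diagonal, so that a single $\Lambda$-jump a.s.\ kills at most one of them. Pushing this beyond the regularly-varying-$\psi$ case handled by Labb\'e (Theorem~\ref{teo:labbe}) will require a uniform estimate on the rate of simultaneous near-extinctions valid across the whole CDI class, and it is precisely this step that has so far resisted analysis.
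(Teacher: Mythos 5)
This statement is a \emph{conjecture} (Conjecture~\ref{conj:labbe}): the paper does not prove it, and explicitly says only that Proposition~\ref{extinction-polya} supplies an equivalent urn-model criterion ``that might be helpful in the future to prove this conjecture.'' Your proposal likewise does not prove it --- you state yourself that the key uniform estimate across the whole CDI class ``has so far resisted analysis'' --- so it cannot be accepted as a proof of \eqref{eq:simu-ext} for general $\Lambda$ satisfying \eqref{eq:cond}; at best it is a research plan, and the honest thing is to present it as such.

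Beyond the missing final step, the reduction you sketch is circular at a crucial point. You invoke Theorem~\ref{teo-block-type2} and its ``almost surely non-degenerate'' one-dimensional marginals to argue that the coupon-collection transition can only lose one colour at a time. But Theorem~\ref{teo-block-type2} \emph{assumes} that there are no simultaneous extinction events, and its conclusion~(1), the non-degeneracy of $\D^{\Lambda}_K$, is exactly the property that Proposition~\ref{extinction-polya} shows to be \emph{equivalent} to the conjecture. So the step ``the marginals are non-degenerate, hence no two colours die together'' presupposes the statement to be proved; the entire content of the conjecture is precisely to show, for general CDI $\Lambda$, that the limiting $\Lambda$-urn mass partition $\D^{\Lambda}_k$ has no zero coordinate, and neither the paper nor your argument establishes this outside the regularly-varying case already covered by Labb\'e (Theorem~\ref{teo:labbe}). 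Two smaller issues: the remark that the coupon collector ``finishes by collecting the last coupon of a single colour'' is trivially true for any non-degenerate mass partition and is not where the difficulty lies; and your finite-$N$ picture of a simultaneous extinction as ``a single $\Lambda$-event absorbing two singleton lineages'' does not describe the limiting object correctly --- in the limit an extinction corresponds to an asymptotic colour frequency in the $\Lambda$-urn (equivalently, a coordinate of $\hPi^k$) vanishing, which is the degeneracy question again, not an event of a single Poissonian jump.
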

Now note that when $\Lambda \neq \Lambda(\{0\}) \,\delta_0$, it is obvious that 
\[{\mathbb P}\left( \exists t > 0 \ : \ | A^{\infty}_{t} - A^{\infty}_{t^-}| \geq 2 \right) \ = \ 1,\]
since $(A^{\infty}_{t},t>0)$ is the number of blocks in the $\Lambda$-coalescent, which by assumption has multiple mergers.
Thus, under the assumptions of the previous theorem and whenever $\Lambda\neq \Lambda(\{0\})\,\delta_0$, 
the block counting process and the extinction process cannot have the same law. Thus, even if 
Theorem \ref{teo-intro1} suggests a strong relation between the block counting and the extinction processes, those two processes are in general very different
when considering coalescent processes with multiple mergers.

\subsection{$\Lambda$-urn. Haplotype block process, continued.} In this section, we
present an extension of Theorem \ref{teo-block-type} for $\Lambda$-coalescent processes in the CDI class.
Define $Y^n$ the r.v. valued in $[n-1]$ with the following distribution
\begin{equation} \label{eq:yn}
  \P\left(Y^n=k-1\right) \ = \ \frac{\int_{[0,1]} {n \choose k} x^{k-2} (1-x)^{n-k}   \Lambda(dx) }{ \int_{[0,1]}  (1-n(1-x)^{n-1} - (1-x)^n) \Lambda(dx) }\qquad k\in\{2,\ldots, n\}.
\end{equation}
With this r.v. at hand, we now define an urn model as follows. At step $n$, the urn contains $U_n$ colored balls, and we let $B(n)$ denote the vector containing the numbers of balls of each color.
Take $B(0) =  \ (B_1(0),\ldots,B_c(0))\in \N^c$ as the initial configuration of the urn, where $c$
is the initial number of colors in the urn.
We now update the configuration of the urn as follows.
Conditional on $B(n)$, the configuration of the urn at time $n+1$ is obtained 
by drawing a ball uniformly at random from the urn and by replacing the ball together with $y^n$
balls of the same color, where $y^n$ is an independent random variable 
distributed as $Y^{U_n}$. We call this urn model a \emph{$\Lambda$-urn}
with initial condition $B(0)$.

\begin{prop}\label{ref:conv}
Let $(B(n); n\geq0)$ be a $\Lambda$-urn with initial condition ${\mathcal B}$ and let $c=\cL({\mathcal B})$ be the number of colors. Let $m_n^{\mathcal B}$
be the mass partition induced by the color frequencies 
$$
\left( \frac{B_i(n)}{\sum_{j=1}^c B_{j}(n)} \right)_{i\in [c]}.
$$
Then there exists a random mass partition  $m_\infty^{\mathcal B}$ on $[c]$ such that 
 \[ m_n^{\mathcal B}  \ \ \rightarrow \  m_\infty^{\mathcal B}  \ \ \mbox{ \ a.s. \ as $n\to\infty$}.\]

\end{prop}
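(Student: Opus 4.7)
The plan is to recognize this as a multi-color generalized P\'olya urn and show that each color frequency is a bounded martingale, so that almost sure convergence follows at once from the martingale convergence theorem.

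Let $\mathcal{F}_n = \sigma(B(0),\ldots,B(n))$ and write $f_i(n) := B_i(n)/U_n$ where $U_n = \sum_{j=1}^c B_j(n)$ is the total number of balls at step $n$. First I would check that the dynamics are well-defined and that $U_n \uparrow \infty$: by construction $Y^{U_n}$ takes values in $\{1,\ldots,U_n-1\}$, so the increment $y^n \geq 1$ almost surely and hence $U_{n+1} = U_n + y^n$ is strictly increasing. Moreover $Y^n$ is a genuine probability distribution on $\{1,\ldots,n-1\}$ thanks to the binomial identity
\[
\sum_{k=2}^{n} \binom{n}{k} x^{k-2}(1-x)^{n-k} \;=\; \frac{1-(1-x)^n - nx(1-x)^{n-1}}{x^2},
\]
which gives the correct normalization appearing in \eqref{eq:yn}.

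Next, I would compute $E[f_i(n+1)\mid\mathcal{F}_n]$. Conditionally on $\mathcal{F}_n$ and on the auxiliary increment $y^n$ (which is drawn independently of the color of the picked ball, given $U_n$), the event that the ball drawn has color $i$ has probability $f_i(n)$, in which case $B_i(n+1)=B_i(n)+y^n$; otherwise $B_i(n+1)=B_i(n)$. Hence
\[
E\bigl[B_i(n+1)\,\big|\,\mathcal{F}_n, y^n\bigr] \;=\; B_i(n) + y^n f_i(n) \;=\; B_i(n)\left(1+\frac{y^n}{U_n}\right).
\]
Since $U_{n+1} = U_n + y^n$ is $\sigma(\mathcal{F}_n,y^n)$-measurable, dividing yields
\[
E\bigl[f_i(n+1)\,\big|\,\mathcal{F}_n, y^n\bigr] \;=\; \frac{B_i(n)}{U_n} \;=\; f_i(n),
\]
and the tower property gives $E[f_i(n+1)\mid\mathcal{F}_n] = f_i(n)$. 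Thus each $(f_i(n))_{n\geq 0}$ is a martingale taking values in $[0,1]$.

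Applying the martingale convergence theorem to each of the finitely many coordinates $i\in[c]$, there exist random variables $f_i(\infty) \in [0,1]$ such that $f_i(n) \to f_i(\infty)$ almost surely as $n\to\infty$. Since $\sum_{i=1}^c f_i(n) = 1$ for every $n$ and $c$ is finite, the limits satisfy $\sum_{i=1}^c f_i(\infty) = 1$ a.s., so the vector $m_\infty^{\mathcal{B}} := (f_1(\infty),\ldots,f_c(\infty))$ is indeed a (possibly degenerate) mass partition on $[c]$, and $m_n^{\mathcal{B}}\to m_\infty^{\mathcal{B}}$ almost surely. There is no real obstacle here beyond the martingale identity itself; the key observation is that the random size of the increment $y^n$ cancels out exactly when passing from counts to frequencies, which is precisely the feature that makes the generalized P\'olya urn structure preserve each color's expected proportion.
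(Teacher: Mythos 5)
Your proof is correct and follows exactly the route the paper takes: the paper's own argument is the one-line observation that, as in a standard P\'olya urn, each color frequency is a bounded (non-negative) martingale, and you have simply written out the conditioning on $y^n$ that makes this identity explicit. The key cancellation you highlight—the increment size dropping out when passing from counts to frequencies—is precisely what the paper's remark relies on.
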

\begin{proof}
As in a standard Poly\`a urn, the proof follows by noting that each color frequency
is a non-negative martingale.
\end{proof}

\begin{rmk}
Note that the limiting  $m_\infty^{\mathcal B}$ is potentially {\it degenerate}, i.e., 
some of the coordinates might be equal to $0$.
\end{rmk}
In the following, we define $\D^{\Lambda}_k$ the limiting random variable $m_\infty^{\mathcal B}$
when we start from the initial condition ${\mathcal B} \ = \ \underbrace{(1,\ldots,1)}_{k \ \mbox{times}}$.
In the case $\Lambda = \Lambda(\{0\})\,\delta_0$,  the $\Lambda$-urn coincides with the classical Poly\`a urn, so that $\D^{\Lambda}_k$ is the Dirichlet distribution $\D_k$.
As a consequence, Theorem \ref{teo-block-type} is a special case of the next result.

\begin{teo}\label{teo-block-type2}
Assume that $\Lambda$ belongs to the CDI class and that there are no simultaneous extinction events in ${\hat A}^\infty$ a.s..
Then 
\begin{enumerate}
\item $\D_{K}^\Lambda$ is non-degenerate a.s.
\item  As $N\to\infty$
\[ \left( \hat \pi_k^N \right)_{k=2}^n \longrightarrow  \left( \hat \pi_k^\infty \right)_{k=2}^\infty \mbox{in the sense of f.d.d.} \]
where the limiting vector is uniquely determined by the property that for every $K\in\N$,
the discrete process $\left( \hat  \pi^\infty_{K-k}; \ k\in\{0,\ldots,K-2\} \right)$ is a Markov process such that
\begin{enumerate}
\item The initial distribution $\hat \pi^\infty_K$ is distributed as $\D_{K-1}^\Lambda$.
\item The configuration at time $i$
is obtained from the configuration at time $i-1$ as follows. 

Conditional on  $\hat \pi^\infty_{K-i}$, let $X_i$ be 
the coupon collection associated to $\hat \pi^\infty_{K-i}$. 

Conditional on $X_i$, the block process at step $i+1$ (i.e., $\hPi^\infty_{K-(i+1)}$)
is distributed as $m_\infty^{X_i}$.
\end{enumerate}

\end{enumerate}
\end{teo}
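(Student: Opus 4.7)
The plan is to realize the haplotype block process inside the Donnelly--Kurtz look-down construction of the $\Lambda$-FV, which simultaneously couples the $(N,\Lambda)$-FV processes for all $N\leq\infty$ on a common probability space and lets us identify alleles with the ``ancestral types'' surviving on each level. Two structural facts will underpin the proof: (i) by exchangeability of the look-down, conditional on the vector of type frequencies at any time $t$, the types on levels $1,2,\ldots$ form an i.i.d.\ sequence with that distribution; (ii) the $\Lambda$-urn of the previous subsection encodes how the counts of a fixed family of surviving types grow under reproduction events in the look-down.

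For part (1), I would interpret $\D_K^\Lambda$ as the law of the limiting frequencies of $K$ ancestral types in the look-down, sampled until the next extinction event. A coordinate of $\D_K^\Lambda$ equal to zero on an event of positive probability would translate, in the $\Lambda$-FV picture, into one of these $K$ initial types being lost simultaneously with another, contradicting the no-simultaneous-extinction hypothesis on $\hat A^\infty$.

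For part (2), fix $K\in\N$ and condition on $\hat\pi_{K-i}^\infty$ at step $i$. By fact (i), at time $\hat T^{K-i}$ the types on levels $1,2,\ldots$ are i.i.d.\ with distribution $\hat\pi_{K-i}^\infty$ on $\{1,\ldots,K-i-1\}$. The founder level of each surviving type is its first occurrence in this i.i.d.\ sequence; the largest founder level coincides with the coupon-collector time $T$, and the associated type $J=Y_T$ is, under the no-simultaneous-extinction hypothesis, the unique type to disappear at $\hat T^{K-(i+1)}$. The vector $X_i$ records the counts of the surviving types among levels $1,\ldots,T-1$, and by fact (ii), the evolution of the frequencies of these $K-i-2$ surviving types from $\hat T^{K-i}$ up to $\hat T^{K-(i+1)}$ agrees in law with the $\Lambda$-urn starting from $X_i$, whose long-time limit is $m_\infty^{X_i}$; this yields the stated transition law. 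The initial law $\hat\pi_K^\infty\sim \D_{K-1}^\Lambda$ then follows by an analogous identification, tracing the $K-1$ founder levels present at $\hat T^{K}$ back to their distinct initial types at time $0$.

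The main obstacle will be to justify fact (ii) rigorously, i.e., to identify the forward look-down evolution of the surviving-type frequencies between two consecutive extinction events with the infinite-step $\Lambda$-urn limit starting from $X_i$. This requires showing that the type $J$ with the largest founder level is indeed the next to disappear (so that the coupon-collector procedure identifies the correct extinct allele), reconciling the discrete urn parametrisation with the continuous-time Fleming-Viot dynamics, and using the no-simultaneous-extinction hypothesis to ensure that exactly one type is lost at each jump. The f.d.d.\ convergence $(\hat\pi_k^N)_{k=2}^N\to(\hat\pi_k^\infty)_{k=2}^\infty$ as $N\to\infty$ will then follow from the look-down coupling together with Proposition \ref{prop:conv-fwd}.
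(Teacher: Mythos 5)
Your outline follows the same strategy as the paper (look-down construction, $\Lambda$-urn frequencies, coupon collection identifying the next type to be lost), but what you label ``fact (ii)'' and defer as ``the main obstacle'' is precisely the mathematical core of the paper's proof, and without it the argument is not complete. The paper supplies this step through the general particle system of Section~\ref{general-urn} and the intertwining result Proposition~\ref{prop-inter} (proved via a Rogers--Pitman generator computation): the embedded chain of types carried by the levels strictly below the fixation line $L^{k}$ is shown to be exactly the particle system with jump law $Y^{U_n}$ of \eqref{eq:yn}, so that (1) the induced type counts form a $\Lambda$-urn, and (2) \emph{conditionally on the counts, the configuration of types along levels is uniform} ($\P_B$). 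Point (2) is indispensable: it is what allows the paper, at the random extinction time $\hat T^{k}$ (after passing to the limit in Proposition~\ref{prop-inter}(2)), to conclude that levels are conditionally i.i.d.\ given $\hat\pi^{\infty}_{k}$, hence that $L^{k-1}_{\hat T^{k}}$ is a coupon-collector time and the counts below it are the random coupon collection (Lemma~\ref{lem:1}), and then that the post-$\hat T^{k}$ evolution of the surviving types is again a $\Lambda$-urn started from that coupon collection, because the LD events after $\hat T^{k}$ are independent of the past. Your ``fact (i)'' (exchangeability of the look-down at a fixed time) does not by itself justify this: the times $\hat T^{K-i}$ are random and measurable with respect to the whole configuration, so fixed-time exchangeability cannot be invoked directly; the conditional uniform/i.i.d.\ structure at these random times is exactly what Proposition~\ref{prop-inter}(2), applied along the embedded chain below the fixation line, is designed to deliver.

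Two further points are asserted but not argued in your sketch. First, the identification of $\hat\pi^{N,k}$ with the urn frequencies at the first time the number of balls exceeds $N$, which gives the a.s.\ convergence $\hat\pi^{N,k}\to\hat\pi^{\infty,k}$ (Lemma~\ref{conv} in the paper, resting on Proposition~\ref{ref:conv}); invoking Proposition~\ref{prop:conv-fwd} alone does not produce convergence of the frequency vectors. Second, the Markov property of the limiting chain $(\hat\pi^{\infty}_{K-k})_k$ and the fact that the stated initial law and transition mechanism \emph{uniquely determine} it must be addressed (the paper does this in Corollary~\ref{cor:markov} by passing to the limit from the finite-$N$ FV process, and then matches the transition law via the coupon-collection/urn description). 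Your part (1) argument has the right idea --- degeneracy of $\D_{K}^{\Lambda}$ forces $L^{K}_{\hat T^{K+1}}=+\infty$, i.e.\ a simultaneous extinction --- but it too relies on the conditional i.i.d.\ structure at $\hat T^{K+1}$, so it cannot be closed until the intertwining step is established. In short: correct skeleton, same route as the paper, but the decisive lemma is missing rather than proved.
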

The preceding result is proved in Section \ref{sect:proof-bt}.

\begin{rmk} In the binary case, we have characterized the limits of the vectors of frequencies of descendances at jump times of both the common ancestors ($\pi^\infty$, Theorem \ref{teo-block-ancestral}) and the surviving alleles ($\hat \pi^\infty$, Theorem \ref{teo-block-type}). Note that in the general case, we only do so for the latter (Theorem \ref{teo-block-type2}).
\end{rmk}

Finally, we end with a result that may shed some light on Conjecture \ref{conj:labbe}.
\begin{prop}\label{extinction-polya} The  following two statements are equivalent
\begin{itemize}
\item[(i)] There are no simultaneous extinctions in ${\hat A}^\infty$.
\item[(ii)] For every $k\in\N$, the limiting mass partition  ${\mathcal D}^\Lambda_{k}$ is non-degenerate.
\end{itemize}
\end{prop}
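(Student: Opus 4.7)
The forward direction $(i) \Rightarrow (ii)$ is exactly the content of Theorem \ref{teo-block-type2}(1), so the work lies in the converse $(ii) \Rightarrow (i)$. I would prove the converse by contrapositive: assuming that simultaneous extinctions in $\hat A^\infty$ occur with positive probability, I plan to exhibit some $k$ for which $\mathcal{D}^\Lambda_k$ places mass on the set of degenerate mass partitions.

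Choose $K$ such that, with positive probability, the first jump time $\hat T^{K}$ of $\hat A^\infty$ below $K$ satisfies $\hat A^\infty_{\hat T^{K}} < K - 1$. Equivalently, $\hat\pi^\infty_K$ has length strictly less than $K - 1$ on this event. The plan is to upgrade the identification $\hat\pi^\infty_K \stackrel{d}{=} \mathcal{D}^\Lambda_{K-1}$, which Theorem \ref{teo-block-type2}(2)(a) supplies under assumption (i), to an unconditional ``padded'' identity
\[
\hat\pi^\infty_K \mbox{ extended with zeros up to length } K - 1 \quad \stackrel{d}{=} \quad \mathcal{D}^\Lambda_{K-1}.
\]
Once this is established, the positive-probability event $\{\hat\pi^\infty_K \mbox{ has length } < K-1\}$ becomes the positive-probability event $\{\mathcal{D}^\Lambda_{K-1} \mbox{ is degenerate}\}$, which negates (ii).

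To establish the padded identity I would rely on the look-down construction used in later sections of the paper, which provides a consistent coupling of the $(N,\Lambda)$-FV processes across all $N$. Just before $\hat T^{K}$ the infinite population is carried by exactly $K$ ancestor-lineages, and one of them is earmarked for extinction at $\hat T^{K}$ via the coupon-collection mechanism featured in Theorem \ref{teo-block-type2}(2)(b). The relative descendant frequencies of the remaining $K-1$ lineages evolve as a $\Lambda$-urn with $K-1$ colors, so by Proposition \ref{ref:conv} they converge almost surely to a random mass partition with law $\mathcal{D}^\Lambda_{K-1}$. When no further lineage is absorbed at $\hat T^{K}$, these $K-1$ limits are precisely the entries of $\hat\pi^\infty_K$; when $j \geq 1$ further lineages are absorbed simultaneously, the corresponding $j$ coordinates of the limit vanish and are suppressed in $\hat\pi^\infty_K$, which is exactly the padding.

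The main obstacle will be making this coupling rigorous, in particular pinning down the moment at which the $\Lambda$-urn of the $K-1$ pre-absorption survivors should be initialized so that its asymptotic configuration actually matches the (padded) state at $\hat T^{K}$. Exchangeability of the look-down and the stationarity of the $\Lambda$-FV process should both help here, but the careful handling of the ``ghost'' colors destined to vanish in a simultaneous extinction is what requires genuine care.
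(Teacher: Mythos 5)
Your proposal is correct and takes essentially the same route as the paper: (i)$\Rightarrow$(ii) is Section~\ref{sect:sit-ld} (Theorem~\ref{teo-block-type2}(1)), and your contrapositive for (ii)$\Rightarrow$(i) is the paper's direct argument read backwards, both resting on the unconditional look-down/fixation-line identification of the frequencies of the $K-1$ alleles below $L^K$ with the $\Lambda$-urn limit of law $\D^\Lambda_{K-1}$. The ``padded identity'' you flag as the main obstacle is exactly the content of Lemma~\ref{conv} together with the definition of $\D^\Lambda_{K-1}$ as the almost-sure urn limit (which retains its zero coordinates, initialized at time $0$ with one ball per level below $K$ rather than ``just before $\hat T^K$''), so the coupling you worry about is already supplied by Propositions~\ref{prop-inter} and~\ref{ref:conv}.
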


This proposition will be proved in Section \ref{sect:proof-bt} together with Theorem \ref{teo-block-type2}.

 In the next section, 
  we recall  the construction of the $(N,\Lambda)$--FV process
from the Look-Down (LD) process of Donnelly and Kurtz \cite{DK}. This construction will be the cornerstone of our comparison between extinction and coalescence.
In Section \ref{sect:prrofth1}, we specialize in the case where $\Lambda=\Lambda(\{0\})\,\delta_0$ and we give a proof of Theorem \ref{teo:1}. In Section \ref{general-urn}, we present a general urn model
(encompassing the $\Lambda$-urn models) and give a particle representation {\it \`a la} Donnelly and Kurtz \cite{DK}. We then use this representation 
to prove Theorem \ref{teo-block-type2} (and so Theorem \ref{teo-block-type} as a special case) in Section \ref{sect:proof-bt}.

\section{The Look Down (LD) construction}\label{sect:fixation-coalescence-lines}

Our results are based on the Look-Down (LD) process as defined by Donnelly and Kurtz \cite{DK}.
In the sequel, we will work directly with the LD process, without referring to $\Lambda$-FV processes anymore. We will abuse notation, and use the same symbols 
for the coalescence, extinction times etc. as the ones used in the introduction.  This abuse of notation will be based on the fact that all those quantities in the LD and their analog for the $(N,\Lambda)$-FV are known to be identical in distribution \cite{DK, H}.
At time $t=0$, each level is assigned a distinct allele, where here alleles are i.i.d. uniform in $[0,1]$. Now we let $\nu$ denote a Poisson point measure in $\R\times \mathcal{P}(\N)$ with intensity measure $dt\otimes dL$, where $L$ is defined by 
\begin{equation}
\label{eqn:def-LD}
L = \int_{(0,1]}x^{-2}\Lambda(dx) P_x + \Lambda(\{0\}) \sum_{i<j}Q_{ij} ,
\end{equation}
with $P_x$ the law of the set of 1's in a sequence of i.i.d. Bernoulli r.v. with parameter $x$ and $Q_{ij}$ the Dirac mass at $\{i,j\}$. Each atom $(t, A)$ of $\nu$ is called an \emph{LD-event}. At each LD-event $(t,A)$, all levels $i\in A$ inherit the allele carried by level $\min A$. In terms of the genealogy, the lineage present at level $\min(A)$ splits at time $t$ and gives birth simultaneously to new lineages whose locations are the levels labelled by $A$. All other lineages are untouched but are shifted upwards so that lineages do not cross (see Fig \ref{taut}). More rigorously, if $(t,A)$ is an LD-event, the lineage present at level $i\not\in A$ at $t-$ jumps to level $i+\Delta$ at time $t$, where
$$
\Delta = \#  ((A\setminus \min A)\cap [i]).
$$
Note that $\Delta\in \{0,1\}$ a.s. when $\Lambda$ only charges 0.

\medskip

{\bf Fixation lines.} Let $k\in \N$ and let $U_k$ denote the type carried by level $k$ at time $0$. Then define $(L^k_t; t\geq 0)$ the $k^{th}$ {\it fixation line} \cite{DDSJ, PW} as the right continuous jump process equal at time $t$ to the minimum of all levels carrying type $U_k$ at time $t$. In particular, $L^k_0=k$. The fixation line can be constructed  forwards in time as follows. Conditional on $L^k_t=j$, set
$$
v:=\inf\{s > t \ : \ (s, A)  \mbox{ is a LD event with $\#  (A\cap [j])\geq 2$}\}.
$$
Then $L^k_s=j$ on $[t,v)$ and $L^k_v=j+\Delta$ where $\Delta = \#  ((A\setminus \min A)\cap [j])$.
Define
\be\label{def:sigmaLD}\hat T^{N,k}\ = \ \inf\{t > 0 \ : \  L^k_t \geq N+1 \}, \ \ \hat T^{k}\ = \ \sup\{t > 0 \ : \  L^k_t <\infty \}  \ee
Since $L_t^k$ is the minimum of all levels carrying type $U_k$, $\hat T^{N,k}$ is exactly the time when $U_k$ disappears from the population occupying the first $N$ levels. Recall $\chA_t$ denotes the number of alleles present in this population at time $t$. Observe that by construction, $L_t^{k-1}< L_t^k$ for all $t$ a.s., so that 
\begin{itemize}
\item
For all $t<\hat T^{N,k}$, $\chA_t \geq k$, 
\item
For all $t\geq\hat T^{N,k}$, $\chA_t < k$,  
\end{itemize}
so $\hat T^{N,k}= \sup\{t>0  :  \chA_t  \geq k \} $ exactly as defined in \eqref{def-extinction-time}. Also, $\hPi^{N,k}$ is the vector of frequencies of each of the $N$ initial alleles at time $\hat T^{N,k}$. Note that the length of $\hPi^{N,k}$ is equal to or smaller than $k-1$ and that its components take values in $\{0, \frac 1N, \frac 2N, \ldots, 1\}$.

\begin{proof}[Proof of Proposition \ref{prop:conv-fwd}]
We only prove the result for the forward process $\hat A^{N}$. The backward process $A^N$  
can be handled in a similar way.

We assume that $(\hat T^{N,k};k\geq 1)_N$ and $\hat T^k$ are coupled by the Look-Down. It is clear that for every $k$, the sequence $\{\hat T^{N,k}; N\geq k\}$ is non-decreasing and converges to $\hat T^k$ a.s.
Further, since $\hat T^{N,k} = T^{N,k}$ in law (by Theorem \ref{teo-intro1}(2)), and since $T^{N,k}$ converges in distribution to the $k^{th}$ coalescence time $T^k$ of a $\Lambda$-coalescent
coming down from infinity (under Assumption (\ref{prop:conv-fwd})), it follows that $\hat T^k$ is identical in law with $T^k$ and so is finite a.s.. 
Let us now consider the process
\[\hat A^\infty_t \ = \ \sup\{k\geq1 \ : \ \hat T^k >t\}\]
so that the jump times of $\hat A^N$ converge a.s. to the ones of $\hat A^\infty$.
For every deterministic $t$, it is not hard to see that $\P\left(\hat A^\infty_{t} - \hat A^\infty_{t^-}>0\right) \ =  \ 0$ (i.e, $\hat A^\infty$
does have any fixed point of discontinuity), and thus, 
for every $t\geq 0$ the sequence
$\{\hat A^N_t; N\geq 1\}$ converges to $\hat A^\infty_t$ a.s.. This yields that $\hat A^N$ converges to $\hat A^\infty$ in the sense of f.d.d..
\end{proof}

From now on, we assume that the $\Lambda$-FV dynamics are in the CDI class, that is, condition \eqref{eq:cond} is enforced. We will also  that there are no simultaneous extinctions, i.e.,
\[\mbox{for every $k\geq 2$, \ $\hat T^k <\infty$  and $\hat T^{k+1} < \hat T^{k}$ a.s.}\]
According to Theorem  \ref{teo:labbe}, those two assumptions will be enforced as soon as $\psi$ is regularly varying at $\infty$
with index in $(1,2]$. In particular, this encompasses the binary case $\Lambda = \Lambda(\{0\})\,\delta_0$.

\section{Proof of Theorem \ref{teo:1}}\label{sect:prrofth1}
In this section we consider the extinction process defined from the LD process when $\Lambda = \Lambda(\{0\})\,\delta_0$. Recall that in this case, there are no simultaneous extinction events so that all jumps of $\chA$ are equal to $-1$ a.s., just as for the block-counting process $\cA$ of the Kingman coalescent. 
In order to prove Theorem \ref{teo:1}, we therefore only need to prove the equality in law
\begin{equation}\label{ee}
(T^{N,k}; \ \  2 \leq k\leq N) \ \stackrel{\mathscr L}= \ (\hat T^{N,k};\ \  2 \leq k\leq N),
\end{equation}
where here $T^{N,k}$ is the time at which the block counting process goes from $k$
to $k-1$.
First, we note that the definition (\ref{def:sigmaLD}) immediately implies that 
$$
\hat T^{N,k} \ = \ e_{k} \ + \cdots  \ + e_N
$$
where $e_j$ is the time needed for the $k^{th}$ fixation line to make a transition from level $j$ to $j+1$. By definition of the LD process,
the r.v.'s $e_j$'s are independent and $e_j$ follows the exponential distribution with parameter $\binom{j}{2}$. On the other hand,
by considering the successive coalescence times in the $N$---Kingman coalescent,
$T^{N,k}$ can be decomposed analogously:
$$
T^{N,k} \ = \ e_N' + \cdots +e_{k}'
$$
where $e_j'$ is the time needed for the coalescent to go
from $j$ blocks to $j-1$ block(s), that is $(e_j'; k\leq j \leq N)$
is identical in law to $(e_j; k\leq j \leq N)$. 

The previous argument shows that the one-dimensional marginals of the two vectors in (\ref{ee})
coincide.
In order to prove (\ref{ee}), it remains  to show that 
$\left(\hat T^{N,k-1}-\hat T^{N,k}; 3\leq k \leq N\right)$ is a sequence of independent random variables. 
By a simple induction, this boils down to proving that 
\begin{equation}\label{e2}
\forall  k\leq j \leq N, \   \hat T^{N,k-1}-\hat T^{N,k} \mbox{ is independent of $(\hat T^{N,i}; \ k \leq i \leq  N)$}. 
\end{equation}
Define 
$$
R^{N,k}:= L_{\hat T^{N,k}}^{k-1},
$$
i.e. $R^{N,k}$ is the position of the $(k-1)^{th}$ fixation line at extinction time $\hat T^{N,k}$ (see Fig \ref{taut}).
The crucial observation is that  $\hat T^{N,k-1}-\hat T^{N,k}$ only depends on
\begin{enumerate}
\item[(a)] $R^{N,k}$. 
\item[(b)]  the LD events after time $\hat T^{N,k}$.
\end{enumerate}

\begin{figure}
\includegraphics[scale=0.3]{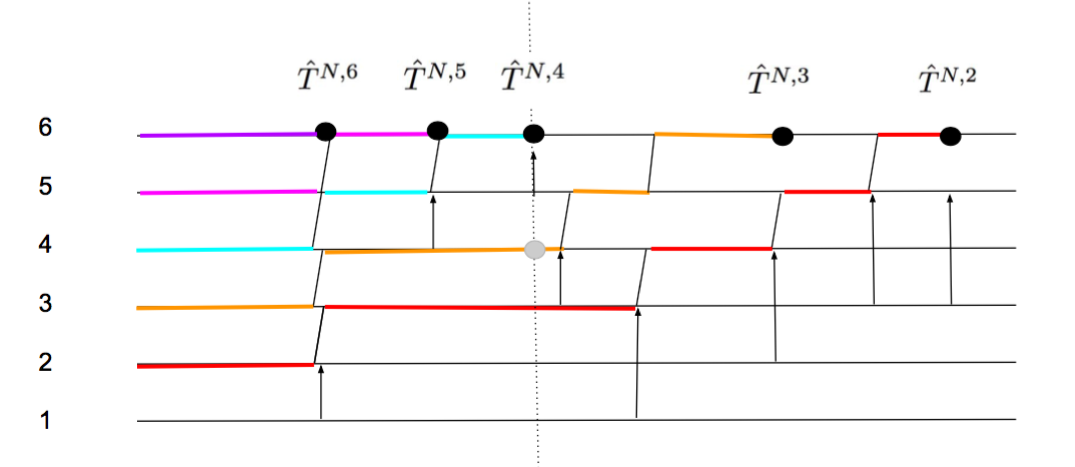}
\caption{In this example: $N=6$, $k=4$ so that  $R^{N,k}=4$. The value of the fixation $L^{k-1}$ (orange line) at time $\hat T^{N,k}$ only depends on the type of arrow 
at the jumps of 
$L^{k}$ (blue curve). $L^{k-1}$ only jumps at the first jump of $L^k$.} 
\label{taut}
\end{figure}
 We will now argue that both (a) and (b) are {\it jointly} independent of $\left(\hat T^{N,i}; k \leq i \leq  N\right)$,
thus showing (\ref{e2}). We start by showing that $R^{N,k}$ is independent of $\left(\hat T^{N,i}; k \leq i \leq  N\right)$.  
Since $L^{k-1}< L^k$, we note that $L^{k-1}$ can only jump if $L^k$ jumps (see again Fig \ref{taut}). More precisely,
if $t$ is a jump time for the fixation line $L^k$ and if
$$
L_{t-}^k \ = \ k+j \quad \mbox{ and } \quad L_{t-}^{k-1}=i   \qquad i<k+j,
$$
then $L_{t}^{k-1}=i+1$ iff the LD event at time $t$ involves a pair $(i_1,i_2)$ with $i_1,i_2\leq i$. This 
happens with probability $\binom{i}{2}/ \binom{k+j}{2}$.

Let us push the previous observation a bit further. For every $j\in[N-k]$, let us consider
\[Z(j) \ = \ L_{\inf\{t>0 \ : \ L^k_t=k+j\}}^{k-1}\]
In words, when $L^k$ reaches level $k+j$, we record the position of the $(k-1)^{st}$
fixation curve. In particular,  $Z(0)=k-1$ and $Z(N+1-k)= R^{N,k}$.
Arguing as before, it is not hard to see that 
$Z$  is a discrete time Markov chain with initial condition $Z(0)=k-1$
and for every $j\in[N-k]$ 
$$
\P\left(Z(j+1) = i+1 \ | \ Z(j)=i \right) \ =  \ 1- \P\left(Z(j+1) = i \ | \ Z(j)=i \right) \ = \  \frac{\binom{i}{2}}{\binom{j+k}{2}},
$$
and by a standard property of Poisson point process, all these events are independent of the jump times of $\left(L^k_t, \ t\in[0,\hat T^{N,k})\right)$. Note that this type of argument is already present in Pfaffelhuber and Wakolbinger \cite{PW}. 
As a consequence, $Z$ is independent of the $\sigma$-field $\mathcal F_k$ generated by $\left(L^k_t, \ t\in[0,\hat T^{N,k})\right)$ and the LD events strictly above $L^k$, that is the LD events $(t,(u,v))$ such that $L^k_t<v$. 
Since $Z(N+1-k)=R^{N,k}$, $R^{N,k}$ is independent of $\mathcal F_k$.

We now claim that $\left(\hat T^{N,i}; k \leq i \leq  N\right)$ is $\mathcal F_k$-measurable.
Indeed for $i\in\{k,\ldots,N\}$, the jumps of $L^i$ are the jumps of $L^k$ in addition to jumps caused by LD events strictly above $L^k$. This shows that the path $\left(L^i_t, \ t\in[0,\hat T^{N,i})\right)$ is $\mathcal F_k$-measurable and so the extinction time $\hat T^{N,i}$ is $\mathcal F_k$-measurable.
Since $R^{N,k}$ is independent of $\mathcal F_k$, $R^{N,k}$ is independent of $\left(\hat T^{N,i}; k \leq i \leq  N\right)$. 

Now let $\mathcal B_k$ denote the $\sigma$-field generated by the LD events after time $\hat T^{N,k}$. Note that both $(\hat T^{N,i}; k \leq i \leq  N)$ and $R^{N,k}$
only depend on the LD events before time $\hat T^{N,k}$. By the strong Markov property, this implies that 
$\mathcal B_k$ is independent of $\left(R^{N,k}, \left(\hat T^{N,i}; k \leq i \leq  N\right) \right)$.
Using the fact that if $A,B,C$ are three r.v.'s with $A\bigCI C$ and $B\bigCI (A,C)$ then $(A,B) \bigCI C$, 
we conclude that $R^{N,k}$ and $\mathcal B_k$ are jointly independent of $\left(\hat T^{N,i}; k \leq i \leq  N\right)$. This ends the proof.

\section{A general urn model}\label{general-urn}

{\bf Some definitions.} In the following $c\in\N$ and corresponds to the number of colors in the urn. Further,
$(p^l)_{l\in\N}$ is a family of probability distributions indexed by $\N$.
 We consider a discrete time increasing Markov process $U_n$ such that conditional $U_n=l$, the jump probability distribution at time $n$
is given by $p^{l}$. We will think of $U_n$
as the size of the urn at time $n$.

For every $V\in \bigcup_{l\in\N} [c]^l$, recall that $\cL(V)$ is the length of the vector $V$. For every finite subset $S \subset \N$, and every $u\in[c]$, we define
$\sigma_S^u(V)$ as the only vector $W$ such that 
\begin{enumerate}
\item for every $j\in S$, $W_j = u$
\item if we remove the lines of $W$ with indices in $S$, the resulting vector is equal to $V$.
\end{enumerate}
Now for every $B \in \N_0^c$, letting $l = \sum_{u=1}^c B_u$, we define $\P_B$ as the uniform probability measure on 
\[\left\{ V \in [c]^l \ : \ \forall u\in[c], \  \sum_{i=1}^l 1_{V_i = u} \ = \ B_{u}  \right\}.\]

\smallskip

{\bf A general urn model.} We now define an urn model, whose state at step $n$ is denoted by $B(n)\in \N_0^c$, where in particular $U_n\ = \ \sum_{u=1}^c B_u(n)$ is the number of balls in the urn. At step $n+1$, conditional on $B(n)$, $B(n+1)$ is generated by the following procedure.
\begin{enumerate}
\item Sample an independent  r.v. $b_n$  according to $p^{U_n}$.
\item 
Conditional on $b_n$, sample a ball uniformly at random from the urn and return it with $b_{n}$ additional
balls of the same color, so that in particular $U_{n+1} = U_n +b_n$.
\end{enumerate}
When $p^l = \delta_1$ for every $l\in\N_0$, this model coincides with the classical Poly\`a urn.


\bigskip

{\bf A particle system.} In the spirit of Donnelly and Kurtz \cite{DK}, we will represent our urn as a particle system. This particle system will be embodied by a process $\left(V(n) ; n\geq0\right)$ valued in $\bigcup_{l\in\N} [c]^l$, where the length $U_n=\cL(V(n))$ of the vector $V(n)$ is the number of particles at time $n$.

At step $n+1$, conditional on $V(n)$, $V(n+1)$ is generated by the following procedure.
\begin{enumerate}
\item 
Sample an independent  r.v. $b_n$  according to $p^{U_n}$. 
\item
Conditional on $b_n$, draw a random finite set $S$ of cardinal $b_n+1$ uniformly in $[U_n + b_n]$.
\item
Conditional on $S$, letting $S' = S \setminus \{\min S\}$ and $u = V_{\min(S)}(n)$, define
\[V(n+1) = \sigma^u_{S'}\left(V(n)\right). \]
\end{enumerate}
The next result provides a particle representation of the urn model defined above.

\begin{prop}\label{prop-inter}
Define the process $(\bar B(n); n\geq 0)$ valued in $\N_0^c$ so that for every color $u\in[c]$, $\bar B_u(n) \ = \ \sum_{i=1}^{\cL(V(n))} 1_{V_i(n)=u}$,
so that $\bar B$ records the number of types in $V$.
 If the law of $V(0)$ is given by $\P_{B(0)}$, then
\begin{enumerate}
\item $\left(\bar B(n); n\geq0\right)$ and $\left(B(n); \ n\geq0\right)$ are identical in law.
\item For every $n\in\N_0$ and ${\mathcal B}\in\N_0^c$, conditional on the event that $\{\bar B(n) \ = \ {\mathcal B}\}$,
the law of $V(n)$ is given by $\P_{{\mathcal B}}$.
\end{enumerate}
\end{prop}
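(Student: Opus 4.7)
The plan is to argue by induction on $n$, proving (1) and (2) simultaneously. The base case $n=0$ is immediate because $\P_{B(0)}$ is supported on vectors with color counts $B(0)$, so $\bar B(0) = B(0)$ deterministically and $V(0) \sim \P_{B(0)}$ by assumption.

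For the inductive step, I condition on $\bar B(n) = \mathcal{B}$ and use the induction hypothesis to get $V(n) \sim \P_{\mathcal{B}}$. To establish (1) at step $n+1$, I observe that $\P_{\mathcal{B}}$ is invariant under permutations of coordinates and that $S$ is independent of $V(n)$, so for every $u_0 \in [c]$ the color $u = V_{\min S}(n)$ satisfies $\P(u = u_0 \mid \bar B(n) = \mathcal{B}, b_n = b) = \mathcal{B}_{u_0}/U_n$. Hence $\bar B$ has the same transition kernel as $B$: draw $b \sim p^{U_n}$, then draw a color with probability proportional to the current counts and add $b$ balls of that color. Combined with the inductive equality in law for $m \leq n$, this yields (1) at step $n+1$.

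For (2) at step $n+1$, I fix a candidate $W \in [c]^{|\mathcal{B}'|}$ with color counts $\mathcal{B}'$ and compute $\P(V(n+1) = W)$ by enumerating preimages of $W$ under the update rule. For each decomposition $\mathcal{B}' = \mathcal{B} + b\,e_u$, the preimages $(V(n), S)$ compatible with $\bar B(n) = \mathcal{B}$, $b_n = b$, $V_{\min S}(n) = u$ and $V(n+1) = W$ are in bijection with the $(b+1)$-subsets $S$ of $\{j : W_j = u\}$: indeed the update rule forces $W_j = u$ for every $j \in S$ and forces $V(n)$ to be $W$ restricted to $[U_n+b] \setminus S'$, a restriction that automatically has color counts $\mathcal{B}' - b\,e_u = \mathcal{B}$. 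By the induction hypothesis and the independence of $S$ from $V(n)$, each preimage has the same probability $\P(\bar B(n) = \mathcal{B})\,p^{U_n}(b)/[N(\mathcal{B})\,\binom{U_n+b}{b+1}]$, where $N(\mathcal{B})$ is the cardinality of the support of $\P_{\mathcal{B}}$. Summing over the $\binom{\mathcal{B}'_u}{b+1}$ preimages and over decompositions $(\mathcal{B}, b, u)$ of $\mathcal{B}'$ yields $\P(V(n+1) = W)$, which depends on $W$ only through $\mathcal{B}'$; combined with (1) at step $n+1$, this is (2).

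The main obstacle is the combinatorial bookkeeping in the second step: identifying the preimages of $W$ as $(b+1)$-subsets of color-$u$ positions in $W$, checking that the induced $V(n)$ automatically has color counts $\mathcal{B}$, and confirming that the per-preimage probability factorizes through $(\mathcal{B}, b, \mathcal{B}'_u)$ so that the sum over preimages is insensitive to the fine structure of $W$. Step (1), by contrast, is essentially a one-line consequence of the exchangeability of $\P_{\mathcal{B}}$ and the independence of $S$ from $V(n)$.
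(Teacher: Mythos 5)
Your combinatorial core is sound: the identification of the preimages of a target vector $W$ with the $(b+1)$-subsets of the color-$u$ positions of $W$, the count $\binom{\mathcal{B}'_u}{b+1}$, and the factorization of the per-preimage probability through $(\mathcal{B},b)$ are all correct, and they reproduce in probabilistic form the same algebra (the factors $\binom{B_u+b}{b+1}$ and $\binom{l+b}{b+1}$) that the paper runs through at the level of generators. The gap is in how you conclude (1). You show that the conditional law of $\bar B(n+1)$ given the event $\{\bar B(n)=\mathcal{B}\}$ matches the urn kernel, and then assert that, combined with equality in law up to time $n$, this gives equality up to time $n+1$. That inference needs $\bar B(n+1)$ to be conditionally independent of $(\bar B(m))_{m<n}$ given $\bar B(n)$, i.e.\ that $\bar B$ is Markov --- which is not automatic for a function of the Markov chain $V$, and is in fact the crux of the proposition. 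Worse, strong (pathwise) lumpability fails here: conditionally on $V(n)=v$, the law of the reproducing color $V_{\min S}(n)$ depends on the arrangement of the colors inside $v$, not only on its counts, because $\min S$ is biased towards low indices. So the Markov property of $\bar B$ can only come from the exchangeability of $V(n)$, and your inductive hypothesis, which records the law of $V(n)$ given $\bar B(n)$ alone, is too weak to propagate it along the whole trajectory.

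The repair is standard and leaves your computations untouched: strengthen the inductive hypothesis to ``conditionally on $(\bar B(0),\ldots,\bar B(n))=(\mathcal{B}_0,\ldots,\mathcal{B}_n)$, the law of $V(n)$ is $\P_{\mathcal{B}_n}$'', equivalently $\P\left(\bar B(0)=\mathcal{B}_0,\ldots,\bar B(n)=\mathcal{B}_n,\,V(n)=W\right)=\P\left(B(0)=\mathcal{B}_0,\ldots,B(n)=\mathcal{B}_n\right)/N(\mathcal{B}_n)$ for every $W$ with color counts $\mathcal{B}_n$. Since $(b_n,S)$ is independent of the past given $U_n$, both of your one-step computations (the color drawn proportionally to the counts, and the preimage count showing the conditional uniformity of $V(n+1)$) go through verbatim under this finer conditioning, and (1) and (2) then follow together. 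With this fix, your argument is a legitimate elementary alternative to the paper's proof: the paper verifies the intertwining relation $\cV \hat G h = G \cV h$ and invokes Rogers--Pitman, whereas your strengthened induction proves the Rogers--Pitman conclusion by hand, trading the generator algebra for explicit trajectory bookkeeping.
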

\begin{proof}
The first part of the result can be seen as a direct consequence of Theorem 1.1 in Donnelly and Kurtz \cite{DK}. The second point is apparent in the proof of the same result. For the sake of completeness, we provide an alternative proof of Proposition \ref{prop-inter} based on the intertwining relation of Pitman and Rogers \cite{RP}. 

For any space $E$, define $F(E)$ as the set of bounded functions from $E$ to $\R$. We define the operator $\cV$ from $F(\bigcup_{l\in\N_0} [c]^l)$ to $F(\N_0^c)$ through
the following relation
\[\forall B\in \N_0^c,  \  \cV f(B) \ = \ \E_{B}(f (V) )\]
where the RHS means that the average is taken with respect to the random variable $V$ distributed according to $\P_B$.

According to Pitman and Rogers \cite{RP}, Proposition \ref{prop-inter} will follow if one can show that
for every $B\in \N_0^c$, and every bounded function $h\in F( \bigcup_{l\in \N} [c]^l)$, 
 \begin{equation}\label{eq:inter-2}
 \cV \hat G h(B) \ = \ G \cV h(B)
 \end{equation}
where $G$ is the generator associated to the Poly\`a urn and $\hat G$ is the generator of the particle process defined above.

Let $l \ \equiv \ \sum_{u=1}^c B_u$. We first note that 
\begin{eqnarray*}
\hat G h(V) & = & \left[\sum_{u=1}^c  \sum_{b=1}^l \frac{p^l(b)}{ {l+b \choose b+1}}  \sum_{S \subseteq [l + b] : |S|=b } |\{1\leq x < \min(S) : V(x) = u\}| \  h\left(\sigma_{S}^u (V)\right)\right]  \ - \ h(V) 
\end{eqnarray*}

Let $I$ be the term between brackets. In the following, for every ${\mathcal B}\in \N_0^c$, and every $(u,b)\in[c]\times\N_0$,  ${\theta_{u,b}(B)}$ will refer to 
the vector obtained from $B$
by incrementing the $u$ coordinate of $B$ by $b$ units. We have 
\begin{eqnarray*}
\E_B\left( I \right) & = &  
 \sum_{u=1}^c  \sum_{b=1}^l \frac{p^l(b)}{ {l+b \choose b+1}}  \sum_{S \subseteq [l + b] : |S|=b }  \E_{B}\left( \ |\{1\leq x < \min(S) : V_x = u\}| \  h\left(\sigma_{S}^u (V)\right)   \ \right)  \\
& = &  \sum_{u=1}^c  \sum_{b=1}^l \frac{p^l(b)}{ {l+b \choose b+1}}  \sum_{S \subseteq [l + b] : |S|=b }  \E_{B}\left( \ |\{1\leq x < \min(S) : \sigma_{S}^u (V)_x = u\}| \  h\left(\sigma_{S}^u (V)\right)   \ \right)   \\
& = & 
\sum_{u=1}^c  \sum_{b=1}^l \frac{p^l(b)}{ {l+b \choose b+1}}  \sum_{S \subseteq [l + b] : |S|=b }  \E_{\theta_{u,b}(B)}\left( \ |\{1\leq x < \min(S) :  V_x= u\}| \  h\left(V \right) \ | \ \forall j\in S, \ V_j \ = \ u \right)   \\
& = &
\sum_{u=1}^c  \sum_{b=1}^l \frac{p^l(b)}{ {l+b \choose b+1}}  \sum_{S \subseteq [l + b] : |S|=b }  \E_{\theta_{u,b}(B)}\left( \ |\{1\leq x < \min(S) :  V_x= u\}| 1_{\forall j\in S, \ V_j \ = \ u } \  h\left(V \right)  \right)  \\
& \times & 1 / \P_{\theta_{u,b}(B)}( \ \forall j\in S, \ V_j \ = \ u) 
\end{eqnarray*}
where in the second identity we used the fact that $\sigma_S^u(V)_x=V_x$ for $x<\min(S)$, and the third identity follows from the fact
 that for every bounded function $m$:
\[ \E_{B} \left( m(\sigma_S^{u}(V)) \  \right)  \ = \ \E_{\theta_{u,|S|}(B)}\left(  \ m(V) \ | \   \ \forall j\in S, V_j = u \right).\]
On the other hand, if $B$ is such that $B_u>b$ then
\[\sum_{S \subseteq [l + b] : |S|=b } {1}_{\forall j\in S, \ V_j \ = \ u } \   |\{1\leq x < \min(S) :  V_x= u\}|   \ = \ {B_u \choose b+1}, \ \ \P_{B}  \ \mbox{a.s.}\]
whereas
\[ \P_{B}( \ \forall j\in S, \ V_j \ = \ u) \ = \ \frac{B_u \cdots (B_u - (|S|-1))}{l \cdots (l-(|S|-1))}, \ \ \mbox{where $l \  = \ \sum_{u=1}^c B_u$,} \]
and thus, applying the two previous formulas after replacing $B$ by $\theta_{u,b}(B)$
\begin{eqnarray*}
\E_B\left( I \right) & =  & \sum_{u=1}^c  \sum_{b=1}^l \frac{p^l(b)}{ {l+b \choose b+1}} \frac{(l+b)\cdots(l+1)}{(B_u+b)\cdots(B_u+1)} {B_u+b \choose b+1}  \E_{\theta_{u,b}(B)}(h(V))  \\
& = & \sum_{u=1}^c \frac{B_u}{l}  \sum_{b=1}^l  p^l(b)   \E_{\theta_{u,b}}(h(V)).  
\end{eqnarray*}
Further, since 
\begin{eqnarray*}
\E_B\left( h(V) \right) 
& = & \sum_{u=1}^c \frac{B_u}{l}  \sum_{b=1}^l  p^l(b)   \E_{B}(h(V)),  
\end{eqnarray*}
we get  that
\begin{eqnarray*}
\cV \hat G h(B)  & = & \sum_{u=1}^c \frac{B_u}{l}  \sum_{b=1}^l  p^l(b)  \left( \E_{\theta_{u,b}(B)}(h(V)) \   - \ \E_B(h(V)) \right) \\
		   & = &  G \cV h(B).
\end{eqnarray*}
As already mentioned, this completes the proof of Proposition \ref{prop-inter} by an application of the intertwining theorem of Rogers and Pitman \cite{RP}.
\end{proof}

\section{Proof of Theorem \ref{teo-block-type2} and Proposition \ref{extinction-polya}}\label{sect:proof-bt}

\subsection{Definition of $\hPi^k$}
We start by defining a quantity $\hPi^k$ in the LD process, which will be the analog of the quantity $\hPi^{\infty,k}$
introduced in Theorem \ref{teo-block-type2}.
Consider the embedded Markov Chain associated to the Markov process 
\[ \left( \xi_1(t),\ldots, \xi_{L_t^k(t)-1}(t) \right), \]
where $\xi(t)$ is the LD process at time $t$. More specifically, $\xi_i(t)$ is the allele carried by level $i$ at time $t$, assuming that at time $t=0$, types are  i.i.d. uniform in $[0,1]$.
From the definition of $L^k$, there are exactly $k-1$ different different values (alleles) along the coordinates of the chain at any time $n$.
As a consequence, this chain can be mapped to a chain $(V^k(n); n\geq 0)$ valued in $\{1,\ldots,k-1\}$, by replacing each type by its rank in the ordered statistics of alleles (i.e., the smallest $\xi_i(t)$ present in the sample is assigned type $1$ etc.)   
It is not hard to check that $(V^k(n); n\geq 0)$ is identical in law to the particle process described in Section \ref{general-urn} with 
$p^{U_n}$ (the jump law of the vector size at level $U_n$) being given by the law of $Y^{U_n}$ as defined in (\ref{eq:yn}). Indeed, conditional on $U_n=l$, where $U_n=\cL(V^k(n))$ is the length of $V^k(n)$, the next LD-event with an effect on the look-down process up to level $l$ is the first LD-event $(t,A)$ such that $\#A\cap [l]\geq 2$ and then the level that gives birth is $\min A$ and the number of newborn levels is $\#A -1$. As a consequence, 
$$
\P(U_{n+1}-U_n=b-1) = \frac{L(\{A\subset [l]: \#A= b\})}{L(\{A\subset [l]: \#A\geq 2\})}\qquad b\in\{2,\ldots, l\},
$$
and a quick calculation based on the definition of $L$ given by \eqref{eqn:def-LD} shows that this is exactly the distribution of $Y^l$ given in (\ref{eq:yn}). Therefore, we have proved that Rule (1) as specified in the definition of the particle system in Section \ref{general-urn}, is satisfied. Now conditional on $U_{n+1}-U_n=b-1$, the Poissonian construction of the LD-event $(t,A)$ implies that $A$ is uniformly chosen among the subsets of $[l]$ with cardinal $b$, so that Rule (2) is also satisfied.  Last, Rule (3) describing the sharing of new particles amongst old particles, was exactly designed to fit the reordering prescribed by the look-down process at a LD-event.  

Further, the initial distribution of the initial configuration $V^k(0)$ is given by $\P_{\underbrace{(1,\ldots,1)}_{k-1}}$.
Define
\[B_u^k(n) \ = \ \sum_{i=1}^{\cL(V^k(n))} 1_{V_i^k(n)=u}\qquad u\in[k-1]. \]
Then according to Proposition \ref{prop-inter}, $\left(B^k(n); n\geq0\right)$  
defines a  $\Lambda$-urn with initial condition $\underbrace{(1,\ldots,1)}_{k-1}$. 

By Proposition \ref{ref:conv}, there exists a mass partition which is obtained as the limit of the sequence of mass partitions 
$\left( B^k_u(n) / \sum_{u=1}^{k-1} B^k_{u}(n) ; u\in \{1,\ldots,k-1\} \right)$. 
This mass partition will be denoted by $\hPi^k$. Note that by definition  $\hPi^k$ is distributed as 
$\D_{k-1}^\Lambda$. (When $\Lambda=\Lambda(\{0\})\,\delta_0$ this is the standard Dirichlet mass partition.) We now show that $\hPi^k$ corresponds to the quantity $\hPi^{\infty,k}$
introduced in Theorem \ref{teo-block-type2}. 

\begin{lem}[Large population limit]\label{conv} 
For every $k\geq 2$, $\hPi^{N,k}  \rightarrow \ \hPi^k$ a.s. as $N\to\infty$. 
\end{lem}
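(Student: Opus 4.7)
The plan is to realize $\hPi^{N,k}$ as the normalized color frequencies in the first $N$ coordinates of the particle process $V^k$ evaluated at a random step, and then to combine the martingale convergence from Proposition~\ref{ref:conv} with a concentration inequality for sampling without replacement.

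\emph{Step 1 (identification).} Define the stopping time $n_N := \inf\{n\geq 0: U_n \geq N\}$, where $U_n = \cL(V^k(n))$. Since each jump of $V^k$ increases its length by at least one, $n_N$ is almost surely finite, and because $U_n\to\infty$ we have $n_N\to\infty$ a.s. as $N\to\infty$. By construction of $V^k$ and the $k$-th fixation line, the $n_N$-th jump of $V^k$ is located at time $\hat T^{N,k}$: just before it, $L^k_{\hat T^{N,k}-}-1 = U_{n_N-1} < N$, and just after, $L^k_{\hat T^{N,k}}-1 = U_{n_N} \geq N$. Consequently, the types carried by levels $1,\dots,N$ at time $\hat T^{N,k}$ are the first $N$ coordinates of $V^k(n_N)$, so, identifying the $k-1$ surviving alleles with their coded labels $1,\dots,k-1$,
\[
\hPi^{N,k}_u \;=\; \frac{1}{N}\sum_{i=1}^N \mathbf 1_{\{V_i^k(n_N)=u\}}, \qquad u\in[k-1].
\]

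\emph{Step 2 (martingale limit for overall frequencies).} By Proposition~\ref{prop-inter}, the count process $(B^k(n))_{n\geq 0}$ is a $\Lambda$-urn with initial condition $(1,\dots,1)\in\N^{k-1}$. Proposition~\ref{ref:conv} and the definition of $\hPi^k$ give $B^k_u(n)/U_n \to \hPi^k_u$ almost surely as $n\to\infty$, and since $n_N\to\infty$ a.s.,
\[
\frac{B^k_u(n_N)}{U_{n_N}} \;\xrightarrow[N\to\infty]{\text{a.s.}}\; \hPi^k_u.
\]

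\emph{Step 3 (conditional exchangeability and concentration).} The Rogers-Pitman intertwining behind Proposition~\ref{prop-inter} states that for every $n$, $V^k(n)$ conditional on $B^k(n)$ has law $\P_{B^k(n)}$. Summing over $\{n_N=m\}$ transfers this identity to the stopping time $n_N$. Thus, conditional on $B^k(n_N)$, the vector $(V_1^k(n_N),\dots,V_N^k(n_N))$ is a uniform sample of size $N$ drawn without replacement from a multiset of $U_{n_N}$ balls containing $B^k_u(n_N)$ balls of color $u$. The Hoeffding-Chv\'atal inequality for the hypergeometric law yields, uniformly in the conditioning and for any $\epsilon>0$,
\[
\P\!\left( \Big| \frac{1}{N}\sum_{i=1}^N \mathbf 1_{\{V_i^k(n_N)=u\}} - \frac{B^k_u(n_N)}{U_{n_N}} \Big| > \epsilon \;\Big|\; B^k(n_N) \right) \;\leq\; 2\,e^{-2N\epsilon^2}.
\]
Since $\sum_N 2e^{-2N\epsilon^2}<\infty$, Borel-Cantelli forces the difference inside the probability to tend to $0$ almost surely. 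Combining with Step 2 gives $\hPi^{N,k}_u\to\hPi^k_u$ a.s. for each $u\in[k-1]$.

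The delicate point is Step 3: the first $N$ levels of the look-down process are not a priori exchangeable (lower levels are more robust than higher ones), and only the intertwining, transferred to the random step $n_N$, allows us to treat the first $N$ levels at time $\hat T^{N,k}$ as a uniform sub-sample of the full urn configuration, on which the hypergeometric concentration is then straightforward.
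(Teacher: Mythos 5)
Your argument is correct and follows the same route as the paper's proof: identify $\hPi^{N,k}$ with the state of the $\Lambda$-urn/particle system at the first step $n_N$ at which the urn size reaches $N$, then invoke the almost sure convergence of the urn frequencies (Proposition \ref{ref:conv}) along $n_N\to\infty$. Where you differ is in being more careful on one point that the paper's one-line proof elides: $\hPi^{N,k}$ is a frequency vector over the first $N$ levels only, whereas the urn at step $n_N$ contains $U_{n_N}\geq N$ balls (with a possible overshoot whenever $\Lambda\neq\Lambda(\{0\})\,\delta_0$), so the paper's reformulation ``$\hPi^{N,k}$ equals the urn frequencies when the number of balls first exceeds $N$'' is exact only in the binary case. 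Your Step 3 (conditional uniformity of the arrangement, Hoeffding's bound for sampling without replacement, Borel--Cantelli) bridges precisely this discrepancy, so your write-up is, if anything, more complete than the published one. One small caveat: to use the conditional law at the random step $n_N$ you need slightly more than Proposition \ref{prop-inter}(2) as literally stated, since $\{n_N=m\}$ depends on $U_{m-1}$ and not only on $B^k(m)$, so ``summing over $\{n_N=m\}$'' is not automatic from the fixed-time statement; the Rogers--Pitman intertwining does deliver the stronger filtration version (the conditional law of $V^k(m)$ given the whole path $(B^k(0),\ldots,B^k(m))$ is $\P_{B^k(m)}$), which is what legitimates the transfer to the stopping time, and it would be worth making that explicit.
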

\begin{proof}
Recall that $\hPi^{N,k}$ encapsulate the frequencies of the $k-1$ alleles strictly below the fixation line starting at level $k$ at $\hat T^{N,k}$, i.e., when the fixation line goes strictly above level $N$. This can be reformulated by saying that $\hPi^{N,k}$ is the vector of allele frequencies in the $\Lambda$-urn (as defined in the previous paragraph) when the number of balls exceeds $N$ for the first time. Analogously, $ \hPi^k$
was defined as the vector of asymptotic frequencies in the urn. The result then follows by a direct application of Proposition \ref{ref:conv}.
\end{proof}

\begin{rmk}
Note that $\hPi^k$ is potentially degenerate.
\end{rmk}

\subsection{LD Configuration at time $\hat T^k$. Proof of Theorem \ref{teo-block-type2}(1)}\label{sect:sit-ld}

As a simple extension of Proposition \ref{prop-inter}(2) (by going to the limit), it is not hard to see that conditional on $\hPi^k$, the LD process at time $\hat T^k$ is obtained by assigning alleles independently for every level
according to the distribution $\hPi^k$. Next,
we note that the definition of the fixation line $L^{k-1}$ readily implies that
\[\forall t\geq0, \   \ \   L^{k-1}_{t} \ = \ \inf\{l\in\N \ : \ \# \{\mbox{types carried by levels in $[l]$ at time $t$}\} = k-1 \}\]
Let us now assume that $\hPi^k$ is degenerate. The two previous observations would  imply that  $L^{k-1}_{\hat T^k}=+\infty$. Since we assumed that there is no simultaneous extinctions, this yields a contradiction.
This  completes the proof of the first part of Theorem \ref{teo-block-type2}.  

Before proceeding with the second part of the proof,
we mention the following lemma which is a direct consequence of the previous observations.

\begin{lem}\label{lem:1}
Under the assumptions of Theorem \ref{teo-block-type2},  $\hPi^k$ is non-degenerate and conditional on $\hPi^k$, the mass partition
\[B= \left( \sum_{i=1}^{L^{k-1}_{\hat T^k}-1} \ 1_{V_i =  u}; \ u\in[k-2] \right) \]
is identical in law to the  random coupon collection
associated to  $\hPi^k$.

Also, conditional on $B$, the particle configuration
$(\xi_1,\ldots,\xi_{L^{k-1}_{\hat T^k}-1})$ is distributed as $\P_{B}$.  
\end{lem}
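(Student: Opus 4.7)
The plan is to derive the lemma from two ingredients already established in Section \ref{sect:sit-ld}: (i) under the no simultaneous extinctions hypothesis, $\hPi^k$ is non-degenerate and, conditional on $\hPi^k$, the sequence of alleles $(\xi_i)_{i\geq 1}$ at time $\hat T^k$ is i.i.d.\ with common law $\hPi^k$; (ii) the fixation line satisfies
\[L^{k-1}_{\hat T^k} \ = \ \inf\{l\in\N : \#\{\mbox{types carried by }[l]\mbox{ at time }\hat T^k\}=k-1\}.\]

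First, I would combine (i) and (ii). Because no simultaneous extinction occurs, exactly $k-1$ alleles remain at time $\hat T^k$, each with strictly positive asymptotic frequency given by the corresponding coordinate of $\hPi^k$. Consequently, $L^{k-1}_{\hat T^k}$ is the first level $l$ at which all $k-1$ surviving alleles have appeared among $\xi_1,\ldots,\xi_l$, which is precisely the coupon collector stopping time $T$ associated to the probability distribution $\hPi^k$ on $[k-1]$. The last coupon to appear is then $J^{(\hPi^k)}$, and the counts of the remaining $k-2$ coupons at times $1,\ldots,T-1$ are by definition $B^{(\hPi^k)}$. After the natural relabelling of the surviving alleles by $[k-2]$, this yields the first identity in law.

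For the second identity, I would condition further on $\{B^{(\hPi^k)} = \mathcal B,\ J^{(\hPi^k)} = j\}$ together with $\hPi^k = p$. Since the $\xi_i$ are conditionally i.i.d.\ with law $p$, a direct Bayes computation shows that on this event the truncated sequence $(\xi_1,\ldots,\xi_{L^{k-1}_{\hat T^k}-1})$ is uniformly distributed on sequences of length $l-1=\sum_u \mathcal B_u$ valued in $[k-1]\setminus\{j\}$ with color counts $\mathcal B$: each admissible sequence carries the same probability $\prod_u p_u^{\mathcal B_u}$, so the ordering is uniform. This law is by definition $\P_{\mathcal B}$, and since $\P_{\mathcal B}$ does not depend on $p$, integrating over $p$ yields the second claim.

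The main delicate point will be to justify rigorously that the conditional i.i.d.\ structure from Section \ref{sect:sit-ld} extends to an \emph{infinite} exchangeable sequence on $\N$, so that the coupon collector interpretation is exact rather than merely asymptotic, together with the a.s.\ finiteness of the stopping time $T$---but the latter is precisely equivalent to the non-degeneracy of $\hPi^k$, which was already obtained in Theorem \ref{teo-block-type2}(1). The remainder of the argument is routine bookkeeping for conditional distributions of i.i.d.\ samples given their empirical counts.
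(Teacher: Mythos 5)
Your proposal is correct and follows essentially the same route as the paper, which presents the lemma as a direct consequence of exactly the two facts you invoke from Section \ref{sect:sit-ld}: the conditional i.i.d.\ assignment of alleles to levels given $\hPi^k$ at time $\hat T^k$, and the identity $L^{k-1}_{\hat T^k}=\inf\{l : \#\{\mbox{types carried by }[l]\}=k-1\}$, which together yield the coupon-collector identification, with non-degeneracy taken from Theorem \ref{teo-block-type2}(1) as you do. Your explicit Bayes/uniformity computation for the conditional law $\P_B$ of the truncated configuration simply spells out what the paper leaves implicit.
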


\subsection{Markov property and transition probabilities}

\begin{cor}\label{cor:markov}
For every $K\in\N$, $(\hPi^{K-k}; k\in\{0,\ldots, K-2\})$ is a Markov process.
\end{cor}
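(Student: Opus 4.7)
The plan is to combine the strong Markov property of the Look-Down process at the stopping time $\hat T^k$ with Lemma \ref{lem:1}. Writing $\mathcal G_k := \sigma(\hPi^j : K \geq j \geq k)$, the aim is to show that the conditional law of $\hPi^{k-1}$ given $\mathcal G_k$ depends only on $\hPi^k$. First I would verify that $\mathcal G_k \subseteq \mathcal F_{\hat T^k}$, where $(\mathcal F_t)$ is the natural filtration of the LD process; indeed, for each $j \geq k$, Proposition \ref{ref:conv} identifies $\hPi^j$ as the almost sure limit of the color frequencies in the $\Lambda$-urn below the fixation line $L^j$, so $\hPi^j$ is a deterministic function of the LD events occurring before $\hat T^j \leq \hat T^k$.

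Second, by the strong Markov property at $\hat T^k$, the Poisson point measure of LD events after $\hat T^k$ is independent of $\mathcal F_{\hat T^k}$. Proposition \ref{prop-inter} then implies that on levels $[L^{k-1}_t - 1]$ for $t \geq \hat T^k$, the particle process evolves as a $\Lambda$-urn with initial condition $B := B^{(k)}$ recording the color counts at time $\hat T^k$. By Proposition \ref{ref:conv}, its asymptotic frequency vector equals $\hPi^{k-1} = m_\infty^{B}$, a random variable depending only on $B$ and the post-$\hat T^k$ Poisson measure, hence conditionally independent of $\mathcal G_k$ given $B$. Meanwhile, Lemma \ref{lem:1} identifies the conditional law of $B$ given $\hPi^k$ as the random coupon collection associated to $\hPi^k$.

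If one can upgrade this last fact to the identity $\mathcal L(B \mid \mathcal G_k) = \mathcal L(B \mid \hPi^k)$, then combining the preceding ingredients expresses the conditional law of $\hPi^{k-1}$ given $\mathcal G_k$ as the output of a two-step procedure (a coupon collection followed by an independent $\Lambda$-urn) driven solely by $\hPi^k$, which is the Markov property. This upgrade is the main obstacle of the argument. It should follow from the exchangeability/de Finetti representation recalled in Section \ref{sect:sit-ld}---namely that conditional on $\hPi^k$ the LD configuration at $\hat T^k$ is i.i.d.\ with law $\hPi^k$---together with the observation that $\hPi^{k+1}, \ldots, \hPi^K$ are themselves measurable functions of earlier portions of the same exchangeable structure whose terminal empirical measure is $\hPi^k$, so that further conditioning on them cannot refine the law of $B$.
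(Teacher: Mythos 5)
Your architecture (measurability of $\mathcal G_k$ with respect to $\mathcal F_{\hat T^k}$, strong Markov property of the Poisson measure at $\hat T^k$, Proposition \ref{prop-inter} for the post-$\hat T^k$ evolution) is sound, but the step you yourself flag as the main obstacle is a genuine gap, and the justification you sketch does not close it. The identity $\mathcal L(B \mid \mathcal G_k)=\mathcal L(B\mid \hPi^{k})$ --- more precisely, the statement that conditionally on $\hPi^{k}$ the configuration $(\xi_1,\ldots,\xi_{L^{k-1}_{\hat T^k}-1})$ at time $\hat T^k$ (its color counts $B$ together with the uniform arrangement given $B$) is independent of $(\hPi^{j}; j>k)$ --- is essentially the Markov property itself, so asserting that ``further conditioning cannot refine the law of $B$'' is circular. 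None of the results you invoke supplies it: Proposition \ref{prop-inter}(2), Lemma \ref{lem:1} and the de Finetti-type statement of Section \ref{sect:sit-ld} all condition on a single fixed-time quantity ($\bar B(n)$, respectively $\hPi^{k}$), not on the collection $(\hPi^{j}; j\geq k)$. Moreover, for $j>k$ the variable $\hPi^{j}$ is not a functional of the urn attached to $L^{k}$: it is built from the levels below $L^{j}$, hence involves levels strictly above $L^{k}$ during $[0,\hat T^{j})$, so even the path-level (Rogers--Pitman filtering) strengthening of Proposition \ref{prop-inter} for the $k$-th urn would not cover conditioning on $\mathcal G_k$. To complete your route you would need a strictly stronger ingredient, e.g.\ a Donnelly--Kurtz-type filtering statement: conditionally on the whole frequency (empirical-measure) path of the LD process up to the stopping time $\hat T^k$ --- of which the $\hPi^{j}$, $j\geq k$, are functionals --- the configuration at $\hat T^k$ is i.i.d.\ from the current frequencies. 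That statement is true, but it is nowhere established in the paper and would have to be proved.

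The paper sidesteps the issue entirely: it observes that the haplotype block chain is Markov for each finite $N$ in the original exchangeable $(N,\Lambda)$-FV model (by exchangeability, the frequency vector at $\hat T^{N,k}$ is a sufficient statistic for the future evolution), and then passes to the limit using the a.s.\ convergence $\hPi^{N,k}\to\hPi^{k}$ of Lemma \ref{conv}; the LD/urn computation is used only afterwards, to identify the transition kernel once Markovianity is already known. So either prove the filtering lemma sketched above, or fall back on the finite-$N$ approximation argument.
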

\begin{proof}
The result is obviously true for finite population; at least if we look at the original  $(N,\Lambda)$-FV process (not the one defined from the LD process).
The proof is completed by going to the limit.
\end{proof}

In order to describe the transition probabilities of the Markov process described in Corollary \ref{cor:markov}, we fix $k\geq 3$ and we describe the distribution of $\hPi^{k-1}$ given $\hPi^{k}$.
Define $\Net^k$ to be the number of jumps of the fixation line $L^{k-1}$ right before the explosion time $\hat T^k$ (see Fig \ref{taut}), and set 
\[  \tilde V^{k-1}(n) \ = \ V^{k-1}(n + \Net^k)\qquad n\in\N_0.\]
Intuitively, $\tilde V^{k-1}$ corresponds to the description of the urn with index $k-1$ just after the explosion of the urn with index $k$. 
By definition, $\hPi^{k-1}$ is obtained by computing the asymptotic frequencies of the $k-2$ types present in the shifted process $\tilde V^{k-1}$.
Analogously to the proof of Theorem \ref{teo:1} (see again Fig \ref{taut}), we first note that $\tilde V^{k-1}$ is a function of 
\begin{enumerate}
\item[(a)] the types carried by the levels strictly below level $L_{\hat T^k}^{k-1}$ at time $\hat T^k$  (in the original LD process).
\item[(b)] the LD events after time $\hat T^k$.
\end{enumerate}
From Lemma \ref{lem:1}, it follows that conditioned on $\hPi^k$,
the process $\tilde B^{k-1}(0) := B^{k-1}(\Net^k)$ is identical in law to the  random coupon collection
associated to the non-degenerate $\hPi^k$, and further, conditional on $\tilde B^{k-1}(0)$, the particle configuration
$\tilde V^{k-1}(0)$ is distributed as $\P_{\tilde B^{k-1}(0)}$.

Using again Proposition \ref{prop-inter} and the fact that (b) above
is independent of $\hPi^k$,
it follows that conditioned on $\hPi^{k}$, $\hPi^{k-1}$ is identical in law to a $\Lambda$-urn whose initial condition can be described in terms of a coupon collection
generated from $\hPi^{k}$. This is exactly the transition mechanism described in Theorem \ref{teo-block-type2}.

\begin{proof}[Proof of Theorem \ref{teo-block-type2}]
The result follows by combining Lemma \ref{conv}, Corollary \ref{cor:markov} and the correlation between $\hPi^k$ and $\hPi^{k-1}$ exposed above. 
\end{proof}

\begin{proof}[Proof of Proposition \ref{extinction-polya}]

\underline{(ii) $\Longrightarrow$ (i).} If ${\mathcal D}_{k}^\Lambda$ has exactly $k$ coordinates a.s., then Theorem \ref{teo-block-type2} implies that then at extinction time $\hat T^{k+1}$,
there is exactly $k$ types remaining, (and further,  those types have a strictly positive frequency). Thus we must have $\hat T^k>\hat T^{k+1}$.

\underline{(i) $\Longrightarrow$ (ii).} This was already the object of Section \ref{sect:sit-ld}.
\end{proof}

\paragraph{\bf Acknowledgments.} The authors thank the {\em Center for Interdisciplinary Research in Biology} (Coll\`ege de France) for funding.

\bibliographystyle{abbrv}
\bibliography{refs}

\end{document}